\newcommand{\method}[1]{\texttt{#1}}
\newcommand{\tr}{{{\mathsf T}}}
\newcommand{\col}{\textnormal{col}}
\newcommand{\f}{\textnormal{f}}
\newcommand{\C}{\textnormal{cl}}
\newcommand{\ub}{\mathbf{u}}
\newcommand{\ubb}{\bar{\mathbf{u}}}
\newcommand{\hP}{{\hat{P}}}
\newcommand{\sk}{{\small \texttt{S-KMPC}}}
\newtheorem{thm}{Theorem}
\newtheorem{assum}{Assumption}
\newtheorem{prop}{Proposition}
\newtheorem{lem}{Lemma}
\newtheorem{deff}{Definition}
\newtheorem{rem}{Remark}
\newtheorem{exam}{Example}
\crefname{equation}{}{}
\crefname{thm}{Theorem}{Theorems}
\crefname{prop}{Proposition}{Propositions}
\crefname{lem}{Lemma}{Lemmas}
\crefname{deff}{Definition}{Definitions}
\crefname{rem}{Remark}{Remarks}
\crefname{cor}{Corollary}{Corollaryies}
\crefname{assum}{Assumption}{Assumptions}
\Crefname{equation}{}{}
\Crefname{thm}{Theorem}{Theorems}
\Crefname{cor}{Corollary}{Corollaries}
\Crefname{exam}{Example}{Examples}
\Crefname{lem}{Lemma}{Lemmas}
\Crefname{prop}{Proposition}{Propositions}
\Crefname{figure}{Figure}{Figures}
\Crefname{fact}{Fact}{Facts}
\Crefname{table}{Table}{Tables}
\Crefname{section}{Section}{Sections}
\Crefname{appendix}{Appendix}{Appendices}
\Crefname{assum}{Assumption}{Assumptions}
\g@addto@macro\normalsize{%
  \setlength\abovedisplayskip{5pt}%
  \setlength\belowdisplayskip{5pt}%
  \setlength\abovedisplayshortskip{5pt}%
  \setlength\belowdisplayshortskip{5pt}%
}
\newcommand{\longthmtitle}[1]{\mbox{}\emph{(#1):}}
\title[Stability of Koopman MPC]{On the Exponential Stability of Koopman Model Predictive Control}
\author{%
 \Name{Xu Shang} \Email{x3shang@ucsd.edu}\\
 \addr Department of Electrical and Computer Engineering, University of California San Diego
 \AND
 \Name{Jorge Cort\'es} \Email{cortes@ucsd.edu}\\
 \addr Department of Mechanical and Aerospace Engineering, University of California San Diego
 \AND
 \Name{Yang Zheng} \Email{zhengy@ucsd.edu}\\
 \addr Department of Electrical and Computer Engineering, University of California San Diego
}
\begin{document}

\maketitle
\vspace{-2mm}
\begin{abstract}
Koopman Model Predictive Control (MPC) uses a lifted linear predictor to efficiently handle constrained nonlinear systems. While constraint satisfaction and (practical) asymptotic stability have been studied, explicit guarantees of local exponential stability seem to be missing. This paper revisits the exponential stability for Koopman MPC. We first analyze a Koopman LQR problem and show that 1) with zero modeling error, the lifted LQR policy is globally optimal and globally asymptotically stabilizes the nonlinear plant, and 2) with the lifting function and one-step prediction error both Lipschitz at the origin, the closed-loop system is locally exponentially stable. These results facilitate terminal cost/set design in the lifted Koopman space. Leveraging linear-MPC properties (boundedness, value decrease, recursive feasibility), we then prove local exponential stability for a stabilizing Koopman MPC under the same conditions as Koopman LQR. Experiments on an inverted pendulum show better convergence performance and lower accumulated cost than the traditional Taylor-linearized MPC approaches. 
\end{abstract}

\begin{keywords}%
  Model Predictive Control; Koopman Operator; Closed-loop Stability
\end{keywords}

\vspace{-1.5mm}
\section{Introduction}
Model Predictive Control (MPC) is a well-established feedback strategy where, at each sampling instant, a finite-horizon optimal control problem is solved using the current state, and only the first control input is applied before repeating the process \citep{rawlings2017model}. Its explicit handling of input and state constraints has led to success across a wide range of applications \citep{mayne2000constrained, qin2003survey, zheng2016distributed}. For nonlinear systems, however, the MPC problem becomes nonlinear and nonconvex, making exact solutions difficult. In practice, \emph{suboptimal yet feasible} implementations are often adopted, which can still guarantee closed-loop performance with careful design \citep{scokaert2002suboptimal}. This \emph{feasibility-implies-stability} principle underpins many real-time methods, such as successive linearization \citep{diehl2005real}.

To overcome the computational challenges of nonlinear MPC, recent work has explored alternative linearization strategies beyond first-order Taylor expansions. One promising approach is \textit{Koopman linearization}, which lifts the nonlinear dynamics into a higher-dimensional space where they evolve approximately linearly. Originally introduced for autonomous systems \citep{koopman1931hamiltonian}, the Koopman operator framework has been extended to controlled systems \citep{korda2018linear, williams2015data,haseli2023modeling}. A key advantage is that Koopman linear models can be efficiently estimated using data-driven techniques, such as Extended Dynamic Mode Decomposition (EDMD) \citep{williams2015data}. This enables the design of (data-driven) Koopman~MPC, where each step solves a convex program using a linear predictor identified from~offline~data \citep{korda2018linear}. By combining the expressiveness of nonlinear modeling with computational benefits of convex optimization, Koopman MPC has attracted a growing interest for its practical scalability and performance \citep{haggerty2023control, mamakoukas2019local,shang2025dictionary}.

It is known that general nonlinear control dynamics cannot be exactly represented by finite-dimensional Koopman linear or bilinear models~\citep{haseli2023modeling}. Recent work has begun to investigate the closed-loop performance of Koopman MPC for nonlinear systems under modeling errors \citep{zhang2022robust,mamakoukas2022robust,worthmann2024data, bold2024data, de2024koopman}. For instance, \citet{zhang2022robust} proposed a robust tube-based Koopman MPC strategy, which relies on tightened constraints in the Koopman space to ensure \textit{constraint satisfaction} in the presence of small modeling errors;  similarly, \citet{mamakoukas2022robust} enforced {constraint satisfaction} via conservative surrogate constraints using a Hankel-Koopman model. 
 More recently, \citet{de2024koopman} established \emph{input-to-state stability} for a Koopman MPC variant with an interpolated initial condition, while \citet{worthmann2024data} proved \emph{practical asymptotic stability} using terminal ingredients designed from the original nonlinear system. A terminal-free variant was analyzed in \citet{bold2024data,schimperna2025data}, where \emph{(practical) asymptotic stability} is obtained under a cost-controllability assumption on the original nonlinear dynamics.

As discussed above, several notions of closed-loop stability for Koopman MPC have been established under various assumptions. Despite the progress, to our best knowledge, the basic question of \emph{local exponential stability} seems to have been overlooked. In particular, local exponential stability cannot be implied or directly derived by the results in \citep{zhang2022robust,mamakoukas2022robust,worthmann2024data, bold2024data, de2024koopman}. 
In this paper, we revisit this fundamental question: we design practical terminal ingredients in the lifted Koopman space, and provide clear conditions under which Koopman MPC ensures \emph{local exponential stability} of the nonlinear closed-loop system.  For brevity, we refer to this scheme as stabilizing Koopman MPC~(\sk{}).

Our technical results are as follows. We begin with a Koopman linear quadratic regulator~(LQR), closely linked to \sk{}. This is an \emph{unconstrained, infinite-horizon} optimal control problem. With no Koopman modeling error, a globally optimal policy can be obtained by solving a standard LQR in the lifted space (\Cref{lemma:optimal-Koopman-control}); under mild assumptions, this policy \emph{globally asymptotically} stabilizes the original nonlinear dynamics (\Cref{proposition:global-stable}).  If both the lifting function and the one-step Koopman prediction error are Lipschitz around the origin, the nonlinear closed loop is \emph{locally exponentially stable} (\Cref{them:exp-LQR}). These results guide the terminal design in \sk{}: we construct the terminal cost and terminal set \emph{in the lifted space} via the Koopman-LQR. Since \sk{} is designed based on a lifted linear model, it naturally inherits several key properties of linear MPC \citep{rawlings2017model}, including boundedness, decrease of the value function, and recursive feasibility of the Koopman update (\Cref{prop:terminal,prop:boundedness,prop:rec-feasibility}). Together with \emph{continuous} lifting, these yield \emph{local asymptotic stability} of the nonlinear closed loop (\Cref{prop:output-stabi}). Strengthening the assumptions to the Lipschitz lifting function and one-step Koopman prediction error around the origin, \sk{} guarantees the \emph{local exponential stability} of the nonlinear closed-loop system (\Cref{them:exp-stable}). We provide a detailed comparison with existing Koopman stability results later in \Cref{remark:comparison}. 

The rest of this paper is structured as follows. \Cref{sec:preliminary} provides preliminaries on the Koopman MPC. The Koopman LQR problem is discussed in \Cref{sec:LQR-stab} and the  \sk{} design is provided in \Cref{section:analysis-original-state}. Numerical results are shown in \Cref{sec:Num-exp} and we gather our conclusions in \Cref{sec:conclu}.

\vspace{-4mm}
\section{Preliminaries and Problem Statement}
\label{sec:preliminary}
\vspace{-2mm}
We consider a discrete-time nonlinear system of the form
\begin{equation}
\label{eqn:nonlinear}
    x_{t+1} = f(x_t, u_t),  
\end{equation}
where $x_t \in 
\mathbb{R}^n$ is the system sate, $u_t \in \mathbb{R}^m$ is the system input at time $t$, and $f: \mathbb{R}^n \times \mathbb{R}^m \rightarrow \mathbb{R}^n$ denotes the system dynamics. We make the following standard assumption.
\begin{assum}
    \label{assum:nonlinear-prop}
    The function $f$ is continuously differentiable and satisfies $f(\mathbb{0}, \mathbb{0}) = \mathbb{0}$. 
\end{assum}
\vspace{-3mm}

\subsection{Exponential stability}
\vspace{-1mm}
A basic objective is to design a feedback policy $u_t=\pi(x_t)$ that exponentially stabilizes \cref{eqn:nonlinear}. 
\begin{deff}[Local exponential stability] 
\label{def:exp-stab}
The origin~of system \cref{eqn:nonlinear} with a control law $u_t=\pi(x_t)$ is \emph{locally exponentially stable} if there exist $c > 0$, $\rho\in(0,1)$, and a neighborhood $\mathcal{N}$ of the origin, such that the closed-loop dynamics satisfies
$
  \|x_t\| \le c\,\rho^{\,t}\,\|x_0\|$, for all $x_0\in\mathcal{N},  t\in \mathbb{Z}_{\ge 0}$. 
\end{deff}

\vspace{-1mm}
\begin{lem}{\citep[Theorem B.19]{rawlings2017model}} 
\label{lemma:LES}
    Consider the autonomous system $x_{t+1}=g(x_t)$ with $g(\mathbb{0})=\mathbb{0}$. 
Suppose there exist constants $\alpha_1,\alpha_2,\alpha_3>0$, an invariant set $\mathcal D\subseteq\mathbb R^n$ with $\mathbb{0} \in \mathrm{int}(D)$, and a function $V: \mathcal{D} \to \mathbb{R}_{+}$ satisfying
\begin{subequations}
  \begin{align}
  \alpha_1\|x\|^2 \le V(x)\le \alpha_2\|x\|^2, \quad \forall x \in \mathcal{D}, \label{eq:quad-bounds}
\\
  V\big(g(x)\big)-V(x)\le -\,\alpha_3\|x\|^2, \quad \forall x \in \mathcal{D}. \label{eq:decrease}
\end{align}  
\end{subequations}
Then, the system is locally exponentially stable, and $\mathcal{D}$ is a region of attraction (ROA). 
\end{lem}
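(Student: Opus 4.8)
The plan is the standard Lyapunov argument for exponential stability: turn the additive decrease \cref{eq:decrease} into a geometric contraction of $V$ along trajectories, and then transfer that contraction to the state norm using the quadratic sandwich \cref{eq:quad-bounds}.

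First I would combine the hypotheses pointwise. For any $x\in\mathcal D$, the upper bound in \cref{eq:quad-bounds} gives $\|x\|^2\ge V(x)/\alpha_2$, so \cref{eq:decrease} yields $V(g(x))\le V(x)-\alpha_3\|x\|^2\le\rho_0\,V(x)$ with $\rho_0:=1-\alpha_3/\alpha_2$. Evaluating \cref{eq:decrease} together with $V\ge 0$ and the upper bound also forces $\alpha_3\|x\|^2\le V(x)\le\alpha_2\|x\|^2$, hence $\alpha_3\le\alpha_2$ and $\rho_0\in[0,1)$. Since $\mathcal D$ is invariant, $x_0\in\mathcal D$ implies $x_t\in\mathcal D$ for all $t\in\mathbb{Z}_{\ge 0}$, so the per-step inequality $V(x_{t+1})\le\rho_0 V(x_t)$ is valid at every time, and a trivial induction gives $V(x_t)\le\rho_0^{\,t}V(x_0)$.

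Next I would push this back through \cref{eq:quad-bounds}: $\alpha_1\|x_t\|^2\le V(x_t)\le\rho_0^{\,t}V(x_0)\le\rho_0^{\,t}\alpha_2\|x_0\|^2$, so $\|x_t\|\le\sqrt{\alpha_2/\alpha_1}\,(\sqrt{\rho_0})^{\,t}\|x_0\|$. Taking $c=\sqrt{\alpha_2/\alpha_1}\ge 1$ (note $\alpha_1\le\alpha_2$) and $\rho=\sqrt{\rho_0}$ — replaced by any value in $(0,1)$ in the degenerate case $\rho_0=0$, where $x_t=\mathbb{0}$ for all $t\ge 1$ — gives exactly the bound in \cref{def:exp-stab}. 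For the neighborhood $\mathcal N$ one may take any open ball about the origin contained in $\mathcal D$, which exists since $\mathbb{0}\in\mathrm{int}(\mathcal D)$; and because the estimate was derived for every $x_0\in\mathcal D$ using only invariance, $\mathcal D$ itself is a region of attraction.

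There is no genuinely hard step here; the only points requiring care are (i) verifying $\rho_0<1$, i.e.\ $\alpha_3\le\alpha_2$, which follows from the hypotheses themselves, (ii) invoking invariance of $\mathcal D$ so that the contraction applies at each time step rather than only once, and (iii) the degenerate boundary case $\alpha_3=\alpha_2$, in which the trajectory reaches the origin in a single step.
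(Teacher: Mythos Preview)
Your proposal is correct and follows essentially the same approach as the paper's proof: combine \cref{eq:decrease} with the upper bound in \cref{eq:quad-bounds} to obtain the geometric contraction $V(x_{t+1})\le(1-\alpha_3/\alpha_2)V(x_t)$, iterate using invariance of $\mathcal D$, and then sandwich with \cref{eq:quad-bounds} to recover the exponential bound on $\|x_t\|$ with $c=\sqrt{\alpha_2/\alpha_1}$ and $\rho=\sqrt{1-\alpha_3/\alpha_2}$. Your treatment is slightly more careful than the paper's in that you explicitly verify $\alpha_3\le\alpha_2$ and address the degenerate case $\rho_0=0$, but these are minor embellishments of the same argument.
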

\vspace{-2mm}

This standard result is used throughout the paper; for completeness, we provide a brief proof in \Cref{Appendix:LES}.  The function $V$ in \Cref{lemma:LES} is called a Lyapunov function. Both the quadratic upper and lower bounds in \cref{eq:quad-bounds} are important, and the decrease condition~\cref{eq:decrease} ensures stability. Notably, there exist
$
c:\!=\!\sqrt{\alpha_2/\alpha_1}\ge1,  
\rho:\!=\!\sqrt{1-\alpha_3/\alpha_2 }\!\in\!(0,1) 
$ 
and $r>0$ such that every trajectory with $\|x_0\|<r$ is well-defined for all $k\ge0$ and satisfies $\|x_t\|\le c\,\rho^{\,t}\,\|x_0\|, t \in \mathbb{Z}_{\ge 0}$.

\vspace{-2mm}
\subsection{Nonlinear MPC basics and Koopman linearization}
\label{subsec:MPC-basics}
\vspace{-1mm}

\begin{wrapfigure}[7]{r}{.62\textwidth}
\vspace{-5mm}
\begin{subequations} \label{eq:nonlinear-MPC}
    \begin{align}
    \tilde{V}_N^*(x) =    \min_{\mathbf{u}} &\quad \sum_{k = 0}^{N-1} l(x_{t+k},u_{t+k}) + \tilde{V}_\mathrm{f}(x_{t+N}) \\
        \text{subject to} & \quad x_{t+k+1} = f(x_{t+k}, u_{t+k}), \label{eq:nonlinear-MPC-a}\\
        & \quad u_{t+k} \in \mathcal{U}, \, x_{t+k} \in \mathcal{X}, \, k \in \mathbb{Z}_{[0,N-1]},
         \\
        & \quad  x_t = x, \, x_{t+N} \in \tilde{\mathcal{X}}_\mathrm{f}.
    \end{align}
\end{subequations}
\end{wrapfigure}
At each time~$t$, the MPC algorithm solves the nonlinear problem \eqref{eq:nonlinear-MPC} with initial state $x_t = x \in \mathbb{R}^n$ and horizon $N  \in \mathbb{N}$. In  \eqref{eq:nonlinear-MPC},  $\mathcal{X}$ and $\mathcal{U}$ are the state and input constraints, and $\tilde{V}_\mathrm{f}$ and $\tilde{\mathcal{X}}_\mathrm{f}$ denote suitable terminal cost and set.  We assume both $\mathcal{X}$ and $\mathcal{U}$ are convex, and $\mathcal{U}$ is also bounded. We choose $\ell: \mathbb{R}^n \times \mathbb{R}^m \to \mathbb{R}_+$ as a quadratic stage~cost: 
\begin{equation} \label{eq:stage-cost}
\ell(x,u) : = \|x\|^2_Q + \|u\|^2_R, 
\end{equation}
with $Q$ and $R$ being two positive definite matrices. If \cref{eq:nonlinear-MPC} is feasible at $x$, then the optimal value function $\tilde V_N^*(x)$ is  finite and we say $x\in\mathrm{dom}(\tilde V_N^*)$. In this case, let $\tilde{\mathbf u}^*_x$ be an optimal input sequence to \cref{eq:nonlinear-MPC}. The MPC feedback is the first control action $\tilde \kappa(x)\;=\;\tilde{\mathbf u}^*_x(0), \ x\in\mathrm{dom}(\tilde V_N^*)$.

With appropriately chosen terminal ingredients $\tilde V_{\mathrm f}$ and $\tilde{\mathcal X}_{\mathrm f}$, the closed-loop dynamics with the MPC law, \emph{i.e.},
$x_{t+1}=f(x_t,\tilde\kappa(x_t))$, is \textit{locally exponentially stable}, and the value function $\tilde V_N^*$ serves as a Lyapunov function; see \citet[Ch.~2]{rawlings2017model} for details. A well-known challenge is~that \cref{eq:nonlinear-MPC} is generally \emph{nonconvex} and thus hard to solve for global optimality. We next introduce~a popular \textit{Koopman linearization} strategy which is increasingly used in applications;~see~\emph{e.g.},~\citet{shi2024koopman}. 
 
The key idea is to approximate the state sequence in~\cref{eq:nonlinear-MPC-a} using a high-dimensional linear predictor in the Koopman framework \citep{koopman1931hamiltonian}. Define the lifted~state
\begin{equation} \label{eq:dictionary}
    z_t = \Psi(x_t) := \col(\psi_1(x_t), \ldots, \psi_{n_z}(x_t)),
\end{equation}
where $n_z\!\geq\!n$ and each $\psi_i:\mathbb{R}^n \! \to \! \mathbb{R}$ is a chosen \textit{observable}. The dictionary $\Psi$ typically includes the identity mapping so that $x_t$ can be reconstructed from $z_t$ (\emph{i.e.}, $x_t=C z_t$) \citep{korda2018linear,strasser2024koopman,mamakoukas2022robust}. We can represent the nonlinear dynamics \cref{eqn:nonlinear} as
\begin{equation} \label{eq:nonlinear-system-Koopman}
    \begin{aligned}
        z_{t+1} = Az_t + Bu_t + e(Cz_t,u_t), \qquad 
        x_t = Cz_t,
    \end{aligned}
\end{equation}
where $A, B, C$ are matrices with compatible size, and the one-step modeling error is defined~as 
\begin{equation} \label{eq:process-error-term}
    e(Cz_t,u_t)\!=\! e(x_t,u_t)\!:=\! \Psi(f(x_t, u_t))\! -\! A\Psi(x_t)\! -\! Bu_t.  
\end{equation}
The modeling error depends on the choice of observables~\cref{eq:dictionary} and the matrices $A, B$. If $e\equiv 0$, we say  \cref{eqn:nonlinear} admits an \textit{exact Koopman linear embedding} \citep{shang2024willems}, leading to an exact linear predictor in the lifted Koopman space. 

\begin{wrapfigure}[6]{r}{.62\textwidth}
\vspace{-9.5mm}
\begin{subequations} \label{eq:Koopman-MPC}
    \begin{align}
    {V}_N^*(z) =    \min_{\mathbf{u}} &\quad \sum_{k = 0}^{N-1} l(Cz_{t+k},u_{t+k}) + {V}_\mathrm{f}(z_{t+N}) \label{eq:Koopman-MPC-cost} \\
        \text{subject to} & \quad z_{t+k+1} = Az_{t+k}+Bu_{t+k}, \label{eq:Koopman-MPC-cons-1}\\
        & \quad  u_{t+k} \!\in\! \mathcal{U}, \; Cz_{t+k} \!\in\! \mathcal{X}, \, k \!\in\! \mathbb{Z}_{[0,N-1]},\label{eq:Koopman-MPC-cons-2}\\
        & \quad z_t = z, \, z_{t+N} \in {\mathcal{Z}}_\mathrm{f}. \label{eq:Koopman-MPC-cons-3}
    \end{align}
\end{subequations}
\end{wrapfigure}
In practice, if $e$ is ``small'' locally, we can handle it within the MPC design. 
At each time $t$ with an initial state $x_t = x$, we lift this initial state to the Koopman space as $z = \Psi(x)$, and replace the nonlinear optimization \cref{eq:nonlinear-MPC} with problem \eqref{eq:Koopman-MPC}, where ${V}_\mathrm{f}$ and ${\mathcal{Z}}_\mathrm{f} \subseteq \mathbb{R}^{n_z}$ are appropriate terminal cost and terminal set to be designed. Let $\mathbf{u}^*_z$ be one optimal solution for feasible $z$. The Koopman MPC~law~is 
\begin{equation} \label{eq:Koopman-MPC-law}
    \kappa(z) = \mathbf{u}^*_z(0), \quad \forall z \in  \mathrm{dom}({V}_N^*),
\end{equation}
and the closed-loop dynamics of the original nonlinear system \cref{eqn:nonlinear} with the control law \eqref{eq:Koopman-MPC-law} becomes
\begin{gather} 
    x_{t+1} = f(x_t, \kappa(\Psi(x_t))) \label{eq:closed-loop-Koopman}. 
\end{gather}

The basic Koopman MPC formulation was first proposed in \citet{korda2018linear}, without discussing the terminal ingredients and the closed-loop stability. 

\vspace{-4mm}
\subsection{Problem statement}
\vspace{-1mm}
The Koopman MPC problem \cref{eq:Koopman-MPC} is computationally efficient because the predictor \cref{eq:Koopman-MPC-cons-1} is \emph{linear} in $\mathbf{u}$. The lifted linear model matrices $A,B,C$ in \cref{eq:nonlinear-system-Koopman} can be identified from data via EDMD and related methods \citep{williams2015data}. Some recent works \citep{de2024koopman,zhang2022robust,mamakoukas2022robust,worthmann2024data} have addressed constraint satisfaction and (practical) asymptotic stability of the closed-loop system \cref{eq:closed-loop-Koopman}.

In this paper, we revisit the \textit{exponential stability} of \cref{eq:closed-loop-Koopman} with the Koopman MPC law, and discuss the design of the terminal cost ${V}_\mathrm{f}$ and terminal set ${\mathcal{Z}}_\mathrm{f}$ in \cref{eq:Koopman-MPC}. Unlike \citet{de2024koopman,zhang2022robust,mamakoukas2022robust,worthmann2024data}, we assume the lifting observable \cref{eq:dictionary} is given and a linear model $A, B, C$ in \cref{eq:nonlinear-system-Koopman} has been estimated from data. This isolates the core stability questions of the Koopman MPC from the complexity arising in the identification procedure. Our idea follows the standard stabilizing MPC for linear systems \citep[Ch.~2]{rawlings2017model}, adapted carefully to the Koopman-lifted space while accounting for the error in \cref{eq:nonlinear-system-Koopman}. Besides the terminal ingredients ${V}_\mathrm{f}$ and ${\mathcal{Z}}_\mathrm{f}$, the lifting function \cref{eq:dictionary} and the modeling error  \cref{eq:nonlinear-system-Koopman} also affects the closed-loop stability in \cref{eq:closed-loop-Koopman}. We will compare with the existing Koopman stability results in \Cref{remark:comparison}.  

Throughout this paper, for a state $x$ and policy $u=\pi(x)$, we denote the successor of system~\cref{eqn:nonlinear} by $x^+$, with lifted state $z^+ := \Psi(x^+)$. The one-step Koopman prediction is denoted by $\bar z^+ \!:=\! A\,\Psi(x) + B\,\pi(x)$. Note that $z^+ \!\neq\! \bar{z}^+$ unless $e\equiv 0$ in \cref{eq:process-error-term}. We make another assumption, similar to \citep{de2024koopman,zhang2022robust,mamakoukas2022robust}. 
\begin{assum}
\label{assum:Koop-obser}
     The lifting function $\Psi: \mathbb{R}^n  \rightarrow  \mathbb{R}^{n_z}$ is continuous, with $\Psi(\mathbb{0}) = \mathbb{0}$, and $x = C \Psi(x)$ where $C \in \mathbb{R}^{n_z \times n}$. In \cref{eq:nonlinear-system-Koopman}, the pair $(A, B)$  is stabilizable and the pair $(A, C)$ is observable.  
\end{assum}

\vspace{-2mm}
\section{Infinite-horizon Koopman LQR}
\label{sec:LQR-stab}
\vspace{-1mm}
This section discusses an infinite-horizon Koopman LQR problem. The results will facilitate the design of the terminal set and terminal cost for stabilizing Koopman MPC in the next section. 
\vspace{-3mm}
\subsection{Koopman optimal control}
\vspace{-1mm}
\begin{wrapfigure}[9]{r}{.48\textwidth}
\vspace{-14mm}
\begin{equation} \label{eq:optimal-control}
    \begin{aligned}
        \min_{\mathbf{u}_\infty}\quad& \sum_{k=0}^\infty l(x_{t+k}, u_{t+k}) \\
        \text{subject to}\quad &x_{t+k+1} \!=\! f(x_{t+k}, u_{t+k}), \\
        &x_t\! =\! x, k\! \in \! \mathbb{Z}_{\ge 0}.
    \end{aligned}
\end{equation}
\vspace{-3mm}
\begin{equation}
\label{eqn:LQR-KL}
\begin{aligned}
\min_{\mathbf{u}_\infty} & \quad \sum_{k = 0}^{\infty} l(Cz_{t+k}, u_{t+k}) \\
\text{subject to}& \quad z_{t+k+1} \!=\! Az_{t+k}\! +\! Bu_{t+k}, \\
& \quad z_t \!=\! z, k \! \in \!\mathbb{Z}_{\ge 0}.
\end{aligned}
\end{equation}
\end{wrapfigure}
Consider the infinite-horizon optimal control problem \eqref{eq:optimal-control}. We denote $\mathbf{u}_\infty := (u_t, u_{t+1}, \ldots)$ as the control sequence, $x_t = x$ as the initial state at time $t$. The stage cost $l$ is defined in~\cref{eq:stage-cost}. 
Compared with the MPC \cref{eq:nonlinear-MPC}, this formulation \cref{eq:optimal-control} has an infinite horizon and no state/input constraints. 

This nonlinear optimal control problem \cref{eq:optimal-control} is generally hard to solve. We can utilize a Koopman linear model to approximate the nonlinear dynamics in \eqref{eq:optimal-control}, leading to the Koopman LQR problem \eqref{eqn:LQR-KL}, where $z = \Psi(x)$. When the Koopman linear model is exact, we can obtain an \textit{explicit} optimal policy for~\cref{eq:optimal-control}.
\begin{lem} \label{lemma:optimal-Koopman-control}
     Suppose there exists a Koopman lifting \cref{eq:dictionary} such that $ e(Cz_t,u_t) \equiv 0$ in \cref{eq:process-error-term}, and \Cref{assum:nonlinear-prop,assum:Koop-obser} hold. Then, problem \cref{eq:optimal-control} has a \textit{globally} optimal feedback policy 
     \begin{equation} \label{eq:optimal-Koopman-control}
     u_{t+k} = K z_{t+k} = K\Psi(x_{t+k}),  \quad k \in \mathbb{Z}_{\ge 0}
     \end{equation}
     where $K$ is the optimal LQR feedback gain for \eqref{eqn:LQR-KL} associated with $A, B, C, Q, R$, \emph{i.e.}, 
     $K = -(R+B^\tr P B)^{-1} B^\tr PA$, with $P$ be the unique {positive definite} solution to the Riccati equation 
     \begin{equation}
     \label{eqn:Riccati}
     P = C^\tr Q C+ A^\tr P A - A^\tr PB(R + B^\tr P B)^{-1}B^\tr P A.
     \end{equation}
\end{lem}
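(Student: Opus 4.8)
The plan is to exploit the exact Koopman embedding to put the two problems in one-to-one correspondence on trajectories and costs, deduce a one-sided inequality between their optimal values, and then show that the candidate feedback \cref{eq:optimal-Koopman-control} realizes the Koopman-LQR optimum on the \emph{true} plant, which closes the gap.

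\textbf{Step 1 (trajectory and cost correspondence).} Fix any initial state $x$ and any control sequence $\mathbf{u}_\infty$, and let $\{x_{t+k}\}$ be the resulting nonlinear trajectory with $x_t=x$. Define the lift $z_{t+k}:=\Psi(x_{t+k})$. Since $e\equiv 0$ in \cref{eq:process-error-term}, we have $z_{t+k+1}=\Psi\big(f(x_{t+k},u_{t+k})\big)=A z_{t+k}+B u_{t+k}$ with $z_t=\Psi(x)=z$, so $\{z_{t+k}\}$ is admissible for \cref{eqn:LQR-KL} started at $z=\Psi(x)$. Moreover \Cref{assum:Koop-obser} gives $x_{t+k}=C\Psi(x_{t+k})=Cz_{t+k}$, hence $l(x_{t+k},u_{t+k})=l(Cz_{t+k},u_{t+k})$ for every $k$ and the objectives of \cref{eq:optimal-control} and \cref{eqn:LQR-KL} agree term by term along this pair. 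Consequently the lift maps the admissible set of \cref{eq:optimal-control} into that of \cref{eqn:LQR-KL} cost-preservingly (possibly not onto), so the optimal value of \cref{eq:optimal-control} at $x$ is at least the optimal value of \cref{eqn:LQR-KL} at $z=\Psi(x)$.

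\textbf{Step 2 (LQR machinery).} Problem \cref{eqn:LQR-KL} is a standard LQR with state weight $C^\tr Q C\succeq 0$ and input weight $R\succ 0$. Because $(A,B)$ is stabilizable and $(A,C)$ is observable with $Q\succ 0$, the pair $\big(A,(C^\tr Q C)^{1/2}\big)$ is observable; hence \cref{eqn:Riccati} admits a unique positive definite stabilizing solution $P$, the gain $K=-(R+B^\tr PB)^{-1}B^\tr PA$ makes $A+BK$ Schur, the optimal value from $z$ equals $z^\tr P z<\infty$, and it is attained by the closed-loop sequence $z_{t+k+1}=(A+BK)z_{t+k}$, $u_{t+k}=Kz_{t+k}$.

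\textbf{Step 3 (achievability on the plant, and conclusion).} Apply $u_{t+k}=K\Psi(x_{t+k})$ to \cref{eqn:nonlinear} from $x$. By Step 1 its lift satisfies $z_{t+k+1}=(A+BK)z_{t+k}$ with $z_t=\Psi(x)$, i.e., it coincides (by uniqueness of solutions of this linear recursion) with the Koopman-LQR optimal lifted trajectory; hence, again by the term-by-term cost identity of Step 1, the cost of this feedback in \cref{eq:optimal-control} equals $\big(\Psi(x)\big)^\tr P\,\Psi(x)$, which is finite. By Step 1 this value is a lower bound for the optimal value of \cref{eq:optimal-control} at $x$, and being the cost of an admissible sequence it is also an upper bound; therefore the optimal value of \cref{eq:optimal-control} at $x$ equals $\big(\Psi(x)\big)^\tr P\,\Psi(x)$ and is attained by \cref{eq:optimal-Koopman-control}. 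Since $x$ was arbitrary, \cref{eq:optimal-Koopman-control} is a globally optimal feedback policy. I expect the only delicate point to be Step 2, namely verifying that observability of $(A,C)$ together with $Q\succ 0$ upgrades to observability (hence detectability) of $\big(A,(C^\tr Q C)^{1/2}\big)$, which is exactly what yields a positive definite $P$ and the value $z^\tr P z$; the optimization interchange itself is clean, being a one-sided set inclusion plus an explicit achievability construction.
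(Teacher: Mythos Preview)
Your proof is correct and follows essentially the same approach as the paper's: both reduce \cref{eq:optimal-control} to the lifted LQR \cref{eqn:LQR-KL} via the exact embedding and invoke standard LQR theory. Your version is more carefully argued---spelling out the one-sided inequality plus achievability, and verifying that $(A,(C^\tr QC)^{1/2})$ is observable to secure $P\succ 0$---whereas the paper simply asserts that the two problems share the same cost for each input sequence and appeals directly to the LQR solution.
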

\vspace{-7mm}

The proof is not difficult as \eqref{eqn:LQR-KL} is a standard LQR problem, and we provide some details in \Cref{Appendix:optimal-Koopman}. Note that the optimal policy \cref{eq:optimal-Koopman-control} is linear in the lifted Koopman space, but~remains nonlinear in the original state space. 
 The associated optimal value function for \cref{eqn:LQR-KL} is $V_{\infty}^*(z)\! =\! \|z\|_P^2$. While \Cref{lemma:optimal-Koopman-control} is not difficult to establish, it actually gives a globally optimal (nonlinear) policy to a class of nonlinear control problems \cref{eq:optimal-control}. We can further show that the nonlinear feedback law \cref{eq:optimal-Koopman-control} \textit{globally asymptotically} stabilizes the nonlinear system \cref{eqn:nonlinear} if there is no modeling error.
\begin{thm} \label{proposition:global-stable}
    Under the same conditions of \Cref{lemma:optimal-Koopman-control}, consider the feedback law \cref{eq:optimal-Koopman-control}. The closed-loop system $x_{t+1} = f(x_t, K\Psi(x_t))$ of \cref{eqn:nonlinear} is \textit{globally  asymptotically~stable}.
\end{thm}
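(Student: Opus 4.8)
The plan is to exploit that, with $e\equiv 0$, the lifted dynamics are \emph{exactly} linear, so the closed loop in the Koopman coordinates is a stable linear system; global asymptotic stability of the origin in the original state is then obtained by pulling this back through $x_t = Cz_t$ and the continuity of $\Psi$ at the origin. First I would note that, since $f(\mathbb{0},\mathbb{0})=\mathbb{0}$, $\Psi(\mathbb{0})=\mathbb{0}$ and $K\mathbb{0}=\mathbb{0}$, the origin is indeed an equilibrium of $x_{t+1}=f(x_t,K\Psi(x_t))$. Because $e(Cz_t,u_t)\equiv 0$ in \cref{eq:process-error-term}, the identity \cref{eq:nonlinear-system-Koopman} gives $\Psi(f(x_t,u_t)) = A\Psi(x_t)+Bu_t$ exactly; setting $z_t=\Psi(x_t)$ and applying the feedback $u_t = K\Psi(x_t)=Kz_t$ from \cref{eq:optimal-Koopman-control} yields the exact linear recursion $z_{t+1}=(A+BK)z_t$, hence $z_t = (A+BK)^t\,\Psi(x_0)$.

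Second, I would argue that $A+BK$ is Schur. By \Cref{assum:Koop-obser}, $(A,B)$ is stabilizable and $(A,C)$ is observable; since $Q\succ 0$, the pair $(A,C^\tr QC)$ is detectable, so by standard discrete-time LQR theory the gain $K$ of \Cref{lemma:optimal-Koopman-control} (equivalently, the stabilizing solution $P\succ 0$ of \cref{eqn:Riccati}) renders $A+BK$ Schur stable. Consequently there exist $c_0\ge 1$ and $\rho\in(0,1)$ with $\|(A+BK)^t\|\le c_0\rho^{\,t}$, so that
\[
\|x_t\| \;=\; \|Cz_t\| \;\le\; \|C\|\,c_0\,\rho^{\,t}\,\|\Psi(x_0)\|, \qquad \forall\, x_0\in\mathbb{R}^n,\ t\in\mathbb{Z}_{\ge 0}.
\]

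Third, I would deduce global asymptotic stability from this single bound. Global attractivity is immediate since $\rho^{\,t}\to 0$, whence $x_t\to\mathbb{0}$ from every initial condition. For Lyapunov stability, bounding $\rho^{\,t}\le 1$ gives $\sup_{t\ge 0}\|x_t\|\le \|C\|\,c_0\,\|\Psi(x_0)\|$; since $\Psi$ is continuous with $\Psi(\mathbb{0})=\mathbb{0}$, for every $\varepsilon>0$ one can choose $\delta>0$ so that $\|x_0\|\le\delta$ forces $\|\Psi(x_0)\|\le \varepsilon/(\|C\|\,c_0)$, hence $\|x_t\|\le\varepsilon$ for all $t$. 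Combining attractivity and stability yields the claim.

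The main obstacle is precisely the transfer step: because $\Psi$ is assumed only \emph{continuous} (not Lipschitz) at the origin, exponential decay of $z_t$ does \emph{not} transfer to an exponential — or even linear-in-$\|x_0\|$ — decay of $x_t$; the uniform bound $\|C\|\,c_0\,\|\Psi(x_0)\|$ is all one gets, which is exactly why only \emph{asymptotic} (not exponential) stability is claimed here, and why \Cref{them:exp-LQR} additionally imposes Lipschitzness of $\Psi$ near the origin. A minor technical point is to justify that the optimal LQR gain is the stabilizing one under mere detectability of $(A,C^\tr QC)$; the positive definiteness of $P$ asserted in \Cref{lemma:optimal-Koopman-control} makes this routine.
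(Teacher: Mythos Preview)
Your proof is correct. Both you and the paper exploit that, with $e\equiv 0$, the lifted closed loop is exactly $z_{t+1}=(A+BK)z_t$ with $A+BK$ Schur, and recover $x_t=Cz_t$; the global-attractivity arguments are essentially identical. The difference is in the Lyapunov-stability step. The paper works through the LQR value function: it fixes $\epsilon>0$, sets $\rho=\lambda_Q\epsilon^2$, defines the sublevel set $\mathcal{S}=\{x:V_\infty^*(\Psi(x))<\rho\}$, shows $\mathcal{S}$ is invariant via the Riccati identity, uses the lower bound $\lambda_Q\|x\|^2\le V_\infty^*(\Psi(x))$ to get $\mathcal{S}\subseteq\mathcal{B}_\epsilon$, and then invokes continuity of $V_\infty^*(\Psi(\cdot))$ (hence openness of $\mathcal{S}$) to find a neighborhood of the origin inside $\mathcal{S}$. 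Your route is more elementary: you read off the uniform-in-$t$ bound $\|x_t\|\le\|C\|\,c_0\,\|\Psi(x_0)\|$ directly from the Schur estimate $\|(A+BK)^t\|\le c_0\rho^{\,t}$ and then use only continuity of $\Psi$ at $\mathbb{0}$. What your approach buys is brevity and an explicit one-line transient bound; what the paper's approach buys is that exactly the same value-function/sublevel-set template is reused in the proofs of \Cref{prop:output-stabi} and \Cref{them:exp-stable}, so here it doubles as a warm-up for the MPC analysis.
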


\Cref{proposition:global-stable} guarantees only \emph{global asymptotic} stability of the nonlinear closed loop. The proof needs to establish both globally attractive and locally Lyapunov stable properties. Establishing the global attractiveness is easy, but the stability in the sense of Lyapunov requires additional arguments. Due~to page limit, we present the proof details in \Cref{Appendix:global-stable}. 

Although the lifted linear system \(z_{t+1}=(A+BK)z_t\) is globally exponentially stable, this does \emph{not} in general imply global exponential stability of the physical state \(x_t\).~The reason is that the exponential decay of \(z_t\) does not automatically transfer to \(x_t=Cz_t\) unless the lifting \(\Psi\) satisfies additional regularity properties. For example, \(x_t=Cz_t=0\) does not imply \(z_t=0\); some components of \(z_t\) in \(\ker C\) may be nonzero yielding \(x_{t+1}\neq 0\) with the propagation of the dynamics.

\vspace{-3mm}
\subsection{Local exponential stability of Koopman LQR}
\vspace{-1mm}
Under additional conditions, the closed-loop system with the controller \cref{eq:optimal-Koopman-control} is \emph{locally exponentially stable} even with Koopman modeling error. In the following, we denote $\mathcal{B}_r = \{x \in \mathbb{R}^n \mid \|x\|\leq r\}$. 
\begin{assum}
\label{assum:lift-cd}
    There exists constants $r_\psi$ and $L_\psi$ such that the lifting function $\Psi:\mathbb{R}^n \rightarrow \mathbb{R}^{n_z}$ satisfies $\|\Psi(x)\| \le L_\psi \|x\|, \ \forall x \in \mathcal{B}_{r_\psi}$.
\end{assum}
\begin{assum}
\label{assum:pred-erro}
There exist constants $r >0$ and $L>0$ such that the closed-loop one-step prediction error 
        $\bar{e}_\C (x_t) 
:=\Psi(f(x_t, K\Psi(x_t)) - A\Psi(x_t) - BK\Psi(x_t)$ satisfies $
    \|\bar{e}_\C(x)\| \le L \|x\|, \ \forall x \in \mathcal{B}_r$.
\end{assum}

\Cref{assum:lift-cd} means that the lifting function is locally bounded with respect to the physical state  $x$, which holds for all locally Lipschitz continuous functions (note that $\Psi(\mathbb{0}) = \mathbb{0}$ in \Cref{assum:Koop-obser}). \Cref{assum:pred-erro} requires that the one-step closed-loop prediction is sufficiently accurate.
\begin{thm}
\label{them:exp-LQR}
    Suppose \Cref{assum:nonlinear-prop,assum:lift-cd,assum:Koop-obser,assum:pred-erro} hold. Consider the feedback law \cref{eq:optimal-Koopman-control}. There exists $\delta \!>\!0$, such that if $L$ in \Cref{assum:pred-erro} satisfies $L < \delta$, the closed-loop system $x_{t+1} = f(x_t, K\Psi(x_t))$ of \cref{eqn:nonlinear} is \textit{locally exponentially~stable}. 
\end{thm}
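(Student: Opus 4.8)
The plan is to exhibit a quadratic Lyapunov function for the physical closed loop $x_{t+1}=f(x_t,K\Psi(x_t))$ on a small ball $\mathcal B_\varepsilon$ and then invoke \Cref{lemma:LES}. The natural candidate is $W(x):=V_\infty^*(\Psi(x))=\|\Psi(x)\|_P^2$, the lifted LQR value function composed with the lifting. By \Cref{lemma:optimal-Koopman-control} (applied to the exact-embedding linear model $(A,B,C)$ regardless of whether $e\equiv0$ for the true plant), $P\succ0$ solves the Riccati equation \cref{eqn:Riccati}, and the unperturbed linear closed loop satisfies the exact Bellman identity $\|\,(A+BK)z\,\|_P^2-\|z\|_P^2=-\,\|Cz\|_Q^2-\|Kz\|_R^2\le -\|Cz\|_Q^2=-\|x\|_Q^2$ for every $z$ (here $x=Cz$). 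The key point is that this decrease is measured in the \emph{true} physical state $x$, which is what \Cref{lemma:LES} needs.

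The main steps are then as follows. First, \emph{upper and lower quadratic bounds for $W$}: on $\mathcal B_{r_\psi}$, \Cref{assum:lift-cd} gives $W(x)\le \lambda_{\max}(P)L_\psi^2\|x\|^2=:\alpha_2\|x\|^2$; for the lower bound, use $x=C\Psi(x)$ from \Cref{assum:Koop-obser} to write $\|x\|^2=\|C\Psi(x)\|^2\le\|C\|^2\|\Psi(x)\|^2\le(\|C\|^2/\lambda_{\min}(P))\,W(x)$, so $W(x)\ge\alpha_1\|x\|^2$ with $\alpha_1:=\lambda_{\min}(P)/\|C\|^2$. Second, \emph{the decrease along the true closed loop}. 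Write $z^+=\Psi(x^+)=\bar z^++\bar e_\C(x)$ with $\bar z^+=(A+BK)\Psi(x)$ and $\|\bar e_\C(x)\|\le L\|x\|$ on $\mathcal B_r$ by \Cref{assum:pred-erro}. Expand
$W(x^+)=\|\bar z^++\bar e_\C(x)\|_P^2=\|\bar z^+\|_P^2+2\,(\bar z^+)^\tr P\,\bar e_\C(x)+\|\bar e_\C(x)\|_P^2.$
The first term, by the Bellman identity above, equals $W(x)-\|x\|_Q^2-\|K\Psi(x)\|_R^2\le W(x)-\lambda_{\min}(Q)\|x\|^2$. For the cross term, bound $\|\bar z^+\|=\|(A+BK)\Psi(x)\|\le\|A+BK\|\,L_\psi\|x\|$ on $\mathcal B_{r_\psi}$, so $|2(\bar z^+)^\tr P\bar e_\C(x)|\le 2\lambda_{\max}(P)\|A+BK\|L_\psi L\,\|x\|^2$; the quadratic error term is bounded by $\lambda_{\max}(P)L^2\|x\|^2$. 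Collecting everything, on $\mathcal B_{\varepsilon}$ with $\varepsilon:=\min\{r,r_\psi\}$,
$W(x^+)-W(x)\le -\big(\lambda_{\min}(Q)-2\lambda_{\max}(P)\|A+BK\|L_\psi L-\lambda_{\max}(P)L^2\big)\|x\|^2.$
Choosing $\delta>0$ small enough (the smaller root of the quadratic in $L$ that makes the bracket equal $\lambda_{\min}(Q)/2$, say) guarantees that whenever $L<\delta$ the bracket is at least $\alpha_3:=\lambda_{\min}(Q)/2>0$, giving \cref{eq:decrease}.

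Third, \emph{invariance}. \Cref{lemma:LES} requires an invariant sublevel set inside $\mathcal B_\varepsilon$. Since $W$ decreases on $\mathcal B_\varepsilon$ and satisfies the two quadratic bounds, the sublevel set $\mathcal D:=\{x:W(x)\le\alpha_1\varepsilon^2\}$ is contained in $\mathcal B_\varepsilon$ (by the lower bound) and is forward invariant (from any $x\in\mathcal D$, $W(x^+)\le W(x)\le\alpha_1\varepsilon^2$, and $x^+\in\mathcal B_\varepsilon$ so the decrease estimate applies at the next step); it contains a neighborhood of the origin by the upper bound. Applying \Cref{lemma:LES} with $(\alpha_1,\alpha_2,\alpha_3,\mathcal D,W)$ yields local exponential stability of $x_{t+1}=f(x_t,K\Psi(x_t))$, with $c=\sqrt{\alpha_2/\alpha_1}$ and $\rho=\sqrt{1-\alpha_3/\alpha_2}$.

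I expect the only genuinely delicate point to be keeping the \emph{two different radii} ($r$ from \Cref{assum:pred-erro} and $r_\psi$ from \Cref{assum:lift-cd}) and the invariance argument straight, so that every inequality used in the decrease computation is actually valid on the set $\mathcal D$ where it is applied — in particular ensuring $x^+\in\mathcal B_\varepsilon$ before reusing the error bound at the successor. The algebra producing the explicit $\delta$ is routine once the perturbation expansion of $W(x^+)$ is set up; no continuity/Lipschitz property of $f$ beyond \Cref{assum:nonlinear-prop} and the stated bounds on $\Psi$ and $\bar e_\C$ is needed, since the decrease is extracted entirely from the exact linear Bellman identity plus a small perturbation.
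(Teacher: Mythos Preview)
Your proposal is correct and follows essentially the same approach as the paper: use $W(x)=\Psi(x)^\tr P\Psi(x)$ as a Lyapunov candidate, split the one-step change into the exact Riccati decrease plus a perturbation controlled by \Cref{assum:lift-cd,assum:pred-erro}, and pass to a sublevel set inside $\mathcal B_{\min\{r,r_\psi\}}$ for invariance before invoking \Cref{lemma:LES}. The only cosmetic difference is your lower bound $\alpha_1=\lambda_{\min}(P)/\|C\|^2$ versus the paper's $\alpha_1=\lambda_Q$ (obtained from $V_\infty^*(z)\ge\|Cz\|_Q^2$), and your citation of \Cref{lemma:optimal-Koopman-control} is really just an appeal to the Riccati identity, which holds independently of whether $e\equiv 0$.
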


The key idea is to show that the optimal value function for \eqref{eqn:LQR-KL} \emph{i.e.}, $\tilde{V}(x) := \Psi(x)^\tr P \Psi(x)$, is a valid Lyapunov function for the nonlinear system \eqref{eqn:nonlinear}. In other words, we show that $\tilde{V}(x) $ satisfies  \Cref{lemma:LES}. To prove this, one key step is to decompose the one-step Lyapunov change $\tilde{V}(x^+) - \tilde{V}(x)$ as a \textit{Koopman decrease} induced by the Koopman linear model and a \textit{perturbation} resulting from the Koopman prediction error. The Riccati identity ensures a substantial Koopman decrease, while \Cref{assum:lift-cd,assum:pred-erro} together guarantee the perturbation can be made strictly smaller than the Koopman decrease. The proof details are given in \Cref{Appendix:exp-LQR}. 

\vspace{1mm}
We conclude this section with a simple example to show the performance of Koopman LQR for nonlinear systems.\\
\vspace{-11.5mm}
\begin{wrapfigure}[8]{r}{0.5\textwidth}
\vspace{0mm}
\centering
\hspace{-5mm}
\includegraphics[width=0.27\textwidth]{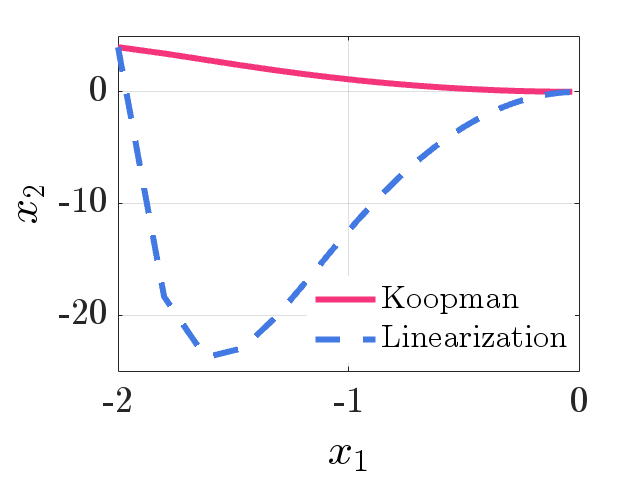} \label{fig:LQR-traj}
\hspace{-5mm}
\includegraphics[width=0.27\textwidth]{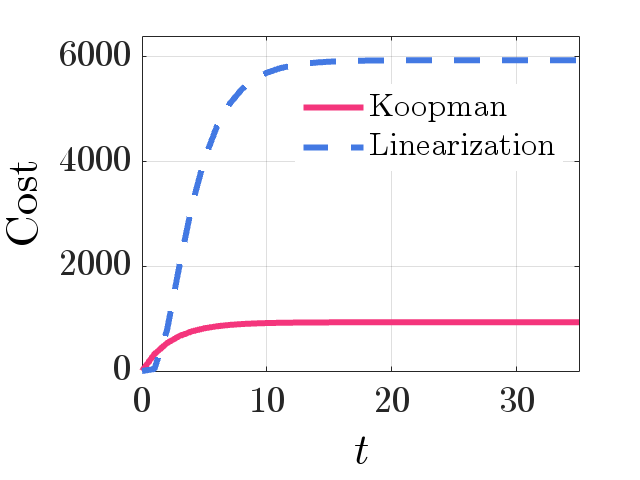}\label{fig:LQR-cost}
\hspace{-8mm}
\vspace{-4mm}
\captionsetup{width=0.9\linewidth}
\caption{LQR performance: exact Koopman model vs. first-order (Taylor) linearization. Left: closed-loop trajectory; right: accumulated cost. 
\vspace{-3mm}}
\label{fig:LQR}
\end{wrapfigure}
\begin{exam} \label{example:exact-linearization}
\hspace{-2mm}Consider the nonlinear system \\
\begin{minipage}{0.5\textwidth}
\[
        \begin{aligned}
            \begin{bmatrix}
                x_1 \\ x_2
            \end{bmatrix}^+ &\! =\! \begin{bmatrix}
                0.9  & 0 \\ 0 & 1.5
            \end{bmatrix}\begin{bmatrix}
                x_1 \\ x_2
            \end{bmatrix}\! +\! \begin{bmatrix}
                0 \\ 1
            \end{bmatrix} u \!+\! \begin{bmatrix}
                0 \\-5 x_1^2
            \end{bmatrix}.
        \end{aligned}
\]
\vspace{1mm}
\end{minipage}
We compare LQR controllers designed from (i) the exact lifted Koopman model and (ii) the local linearization, using $Q=\mathrm{diag}(1,1)$ and $R=1$. The \emph{exact} lifted-Koopman linear representation with the lifting function $\Psi(x)=\operatorname{col}(x_1,x_2,x_1^2)$ and the first-order (Taylor) linearization at the origin are
\[
\begingroup
    \setlength\arraycolsep{3pt}
\def\arraystretch{0.85} 
\label{ex:accu-Koop}
\textnormal{\text{Koopman:}}
    \begin{bmatrix}
        x_1 \\ x_2 \\ x_1^2
    \end{bmatrix}^+ \! =\! \begin{bmatrix}
        0.9  & 0 & 0 \\ 0 & 1.5 & -5 \\ 0 & 0 & 0.81
    \end{bmatrix}\begin{bmatrix}
        x_1 \\ x_2 \\ x_1^2
    \end{bmatrix} \!+\! \begin{bmatrix}
        0 \\ 1 \\ 0
    \end{bmatrix} u, \ \ \textnormal{\text{Taylor:}} \begin{bmatrix}
        x_1 \\ x_2 
    \end{bmatrix}^+ \! = \! \begin{bmatrix}
        0.9  & 0 \\ 0 & 1.5 
    \end{bmatrix}\begin{bmatrix}
        x_1 \\ x_2 
    \end{bmatrix} + \begin{bmatrix}
        0 \\ 1 
\end{bmatrix} u.
\endgroup
\]
As illustrated in Fig.~\ref{fig:LQR}, the Koopman LQR yields faster convergence with much better transient behavior than the linearized LQR. This is because the lifted model captures the quadratic coupling exactly; by \Cref{lemma:optimal-Koopman-control}, the Koopman LQR gives the optimal solution of \cref{eq:optimal-control} in this instance. 
\hfill $\square$
\end{exam}

\vspace{-5mm}
\section{Stabilizing Koopman MPC}
\label{section:analysis-original-state}
\vspace{-1mm}
It is well known that \emph{terminal ingredients} are crucial for closed-loop stability in MPC.  In this section, we design the terminal ingredients for Koopman MPC using the Koopman LQR results of \Cref{sec:LQR-stab}.

\vspace{-1mm}
\subsection{Design of the terminal ingredients} \label{subsection:analysis-Koopman-space}
We construct the terminal cost and terminal set \emph{in the lifted space} using the Koopman LQR. In the following, we assume \Cref{assum:nonlinear-prop,assum:Koop-obser} hold unless stated otherwise. 
\vspace{-2mm}
\begin{itemize}[leftmargin=1em]
\item \textbf{Terminal cost}:
Let $K$ be the optimal Koopman LQR gain from \cref{eq:optimal-Koopman-control}
and $A_K := A + B K$. We choose matrix $\hat{Q} \succ 0$ such that $\hat{Q} \succeq C^\tr Q C + K^\tr R K$.
We design the terminal cost 
\begin{equation} \label{eq:terminal-cost}
    V_\f(z) := z^\tr \hP  z \le \sigma_\hP \| z\|^2,
\end{equation}
where $\hP \succ 0$ is the solution to the Lyapunov equation $A_K^\tr \hP A_K -\hP +\hat{Q} = 0$ (note that $A_K$ is Shur stable) and $\sigma_\hP$ is its maximum eigenvalue.
\vspace{-2mm}
\item \textbf{Terminal set}: The terminal set is designed as 
\begin{equation} \label{eq:terminal-set}
\mathcal{Z}_\f:=\{z \in \mathbb{R}^{n_z} \ | \ V_\f(z) \le \tau \},
\end{equation}
where $\tau > 0$ is chosen such that $\sqrt{\frac{\tau}{\sigma_R}}\mathcal{B}_{1}  \subseteq \mathcal{U}$. The existence of $\tau$ is guaranteed as $\mathbb{0} \in \mathrm{int}(\mathcal{U})$. 
\end{itemize}

\vspace{-2mm}
With these terminal ingredients, the Koopman LQR controller $u_t = Kz_t= K\Psi(x_t)$ can ensure a sufficient decrease in terminal cost at each step and guarantee that the terminal set $\mathcal{Z}_\f$ is invariant for the nominal Koopman linear model. 
\begin{prop}\longthmtitle{Terminal Controller}
\label{prop:terminal}
    Consider the terminal cost $V_\f$  in~\cref{eq:terminal-cost} and terminal set $\mathcal{Z}_\f$ in~\cref{eq:terminal-set}. For the terminal controller $\kappa_\f(z):= K z$ with $K$ from \cref{eq:optimal-Koopman-control}, we have $\kappa_\f(z)\! \in \! \mathcal{U}$, $\forall z \! \in \! \mathcal{Z}_\f$, and
    \begin{equation}
    \label{eqn:terminal-condi}
     \ V_\f(\bar{z}^+) - V_\f(z) 
     \le -l(Cz, \kappa_\f(z)), \quad \forall z \in \mathcal{Z}_\f,
     \end{equation}
    where $\bar{z}^+ \!=\! Az \!+\! B\kappa_\f(z)$ is the one-step Koopman prediction.      
\end{prop}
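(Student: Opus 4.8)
The plan is to treat the two assertions separately and to exploit that under the terminal controller $\kappa_\f(z)=Kz$ the one-step Koopman prediction collapses to $\bar z^+ = Az+BKz = A_Kz$, so the whole statement reduces to quadratic identities for the Schur-stable closed-loop matrix $A_K$. Note that the error term $e$ in \cref{eq:process-error-term} plays no role, since $\bar z^+$ is by definition the \emph{nominal} lifted prediction.

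First I would dispatch the input-feasibility claim. Schur stability of $A_K$ together with $\hat Q\succ 0$ guarantees that the discrete Lyapunov equation $A_K^\tr\hP A_K-\hP+\hat Q=0$ admits a unique solution $\hP\succ 0$; rearranging it as $\hP=\hat Q+A_K^\tr\hP A_K$ and using $A_K^\tr\hP A_K\succeq 0$ shows $\hP\succeq\hat Q\succeq C^\tr QC+K^\tr RK\succeq K^\tr RK$. Hence for any $z\in\mathcal Z_\f$ one has $\|Kz\|_R^2=z^\tr K^\tr RKz\le z^\tr\hP z=V_\f(z)\le\tau$, and bounding below by the smallest eigenvalue $\sigma_R>0$ of $R$ gives $\sigma_R\|Kz\|^2\le\tau$, i.e. $\kappa_\f(z)=Kz\in\sqrt{\tau/\sigma_R}\,\mathcal B_1\subseteq\mathcal U$ by the choice of $\tau$. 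This yields $\kappa_\f(z)\in\mathcal U$ for all $z\in\mathcal Z_\f$.

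For the terminal-cost decrease, I would substitute $\bar z^+=A_Kz$ into $V_\f$ and apply the Lyapunov equation directly, obtaining $V_\f(\bar z^+)-V_\f(z)=z^\tr(A_K^\tr\hP A_K-\hP)z=-z^\tr\hat Q z$. The design inequality $\hat Q\succeq C^\tr QC+K^\tr RK$ then upgrades this to $-z^\tr\hat Q z\le-z^\tr(C^\tr QC+K^\tr RK)z=-(\|Cz\|_Q^2+\|Kz\|_R^2)=-l(Cz,\kappa_\f(z))$, which is exactly \cref{eqn:terminal-condi}. Observe that this decrease in fact holds for every $z\in\mathbb R^{n_z}$; only the input-constraint part above uses the restriction $z\in\mathcal Z_\f$.

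I do not expect a genuine obstacle here: this is the textbook linear-MPC terminal-ingredient construction transported into the lifted Koopman coordinates. The only steps requiring a little care are invoking Schur stability of $A_K$ for the well-posedness and positive definiteness of $\hP$, and reading off from the single Lyapunov equation the ordering $\hP\succeq\hat Q$, which simultaneously bounds $\|Kz\|$ on $\mathcal Z_\f$ (via the choice of $\tau$) and supplies the stage-cost lower bound in the decrease condition; both are immediate.
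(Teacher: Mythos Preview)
Your proof is correct and matches the paper's approach: both invoke the Lyapunov identity $A_K^\tr\hP A_K-\hP=-\hat Q$ together with $\hat Q\succeq C^\tr QC+K^\tr RK$ for the decrease \cref{eqn:terminal-condi}, and both extract $\|Kz\|_R^2\le V_\f(z)\le\tau$ on $\mathcal Z_\f$ to conclude $\kappa_\f(z)\in\sqrt{\tau/\sigma_R}\,\mathcal B_1\subseteq\mathcal U$. The only cosmetic difference is the order---the paper proves the decrease first and then reads off the input bound from it, whereas you pull the matrix ordering $\hP\succeq\hat Q\succeq K^\tr RK$ directly from the Lyapunov equation---but the underlying inequality is the same.
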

This is a standard result in linear MPC, and we provide a proof in \Cref{appendix:proof-terminal-set} for completeness.

\vspace{-3mm}
\subsection{Asymptotic stability for an exact Koopman model}
\vspace{-1mm}
Since the Koopman MPC problem \cref{eq:Koopman-MPC} is designed using the Koopman linear model $z^+ = Az + Bu$, it naturally inherits key properties of standard MPC for linear systems~\citep{rawlings2017model}, such as boundedness, decrease of the value function, and recursive feasibility of the Koopman update.

At time step $t$, given a sequence of control inputs $\mathbf{u} := (u_t, u_{t+1}, \ldots, u_{t+N-1})$ and an initial state $z_t = z$, we denote the nominal Koopman state prediction at time $t+k$ as $\phi(k;z,\mathbf{u}) := z_{t+k}$, where $k \in \mathbb{Z}_{[0,N]}$. We also denote the objective value of \cref{eq:Koopman-MPC} with initial condition $z$ and input sequence $\mathbf{u}$ as $V_N(z, \mathbf{u})$ and its feasible region as $\mathcal{Z}_N\! :=\! \{z \!\in\! \mathbb{R}^{n_z} | \exists \ub \!\in \!\mathcal{U}^N\! \, \textrm{such that}\, \phi(N\!; z,\ub) \!\in\! \mathcal{Z}_\f \ \;\textrm{and} \;\, C\phi(k;z, \mathbf{u}) \in \mathcal{X}, k = 0, \ldots, N-1
\}$, where $\mathcal{U}^N :=\mathcal{U}\times \cdots \times \mathcal{U}$. The following properties are standard in stabilizing linear MPC; see \citep[Ch. 2.4]{rawlings2017model} for details.
\begin{prop}\longthmtitle{Continuity and boundedness}
\label{prop:boundedness}
Consider the Koopman MPC problem \cref{eq:Koopman-MPC} with terminal design $V_\f(\cdot)$ in \cref{eq:terminal-cost} and $\mathcal{Z}_\f$ in  \cref{eq:terminal-set}. Let $V_N^*$ denote its optimal value~function. Then, we~have: 
\vspace{-7mm}
\begin{enumerate}
    \item $\mathbb{0} \in \mathrm{int}(\mathcal{Z}_N)$ and $V_N^*$ is continuous on the interior of its domain $\mathcal{Z}_N$;
    \vspace{-2mm}
    \item $V_N^*$ is bounded with respect to $z, Cz$, and the optimal control sequence $\ub_z^*$: 
  \[
  \begin{aligned}
    \lambda_Q \|Cz\|^2 \le V_N^*(z) \le c_z \|z\|^2, \quad 
    \lambda_R\|\ub_z^*(0)\|^2 \le \lambda_R\|\ub_z^*\|^2 \le V_N^*(z),  \quad \forall z \in \mathcal{Z}_N,
    \end{aligned}
  \]
  where $c_z$ is a positive constant and  $\lambda_Q,
  \lambda_R \in \mathbb{R}$ are the minimum eigenvalues of $Q$ and $R$.
  \end{enumerate}
\end{prop}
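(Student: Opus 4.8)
My plan is to port the standard linear-MPC results of \citet[Ch.~2.4]{rawlings2017model} to the lifted model $z^+=Az+Bu$, reading $Cz\in\mathcal X$ as an output constraint. The machinery applies because \Cref{prop:terminal} already supplies the required terminal ingredients: $\kappa_\f(z)=Kz\in\mathcal U$ on $\mathcal Z_\f$, the terminal decrease \cref{eqn:terminal-condi}, and — as a consequence of \cref{eqn:terminal-condi} together with $V_\f\ge 0$ — forward invariance of $\mathcal Z_\f$ under $A_K$ with $V_\f$ nonincreasing along it. Two structural facts get used repeatedly: $(z,\mathbf u)\mapsto\phi(k;z,\mathbf u)$ is affine, and $V_N(z,\mathbf u)$ is a positive semidefinite quadratic form composed with affine maps, hence jointly continuous and jointly convex; since the feasible set is jointly convex, $V_N^*$ is convex on $\mathcal Z_N$, and since the feasible input set is a closed subset of the compact set $\mathcal U^N$, the minimum is attained and $V_N^*$ is finite on $\mathcal Z_N$.

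For part~1, I would first verify $\mathbb{0}\in\mathrm{int}(\mathcal Z_N)$ using the terminal-controller candidate: for initial $z$, take $\mathbf u(k)=KA_K^{\,k}z$, so that $\phi(k;z,\mathbf u)=A_K^{\,k}z$; since $A_K$ is Schur and $\mathbb{0}$ lies in the interior of $\mathcal U$, of $\mathcal X$ (implicitly assumed — indeed $\mathbb{0}\in\mathrm{int}(\mathcal X)$ is needed for the claim itself), and of $\mathcal Z_\f$ (because $\hP\succ 0$, $\tau>0$), all constraints hold once $\|z\|$ is small, so a ball about $\mathbb{0}$ lies in $\mathcal Z_N$. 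Continuity of $V_N^*$ on $\mathrm{int}(\mathcal Z_N)$ then follows from the standard continuity theorem for parametric convex programs: the objective is jointly continuous, and the feasible-input correspondence $z\mapsto\{\mathbf u\in\mathcal U^N : C\phi(k;z,\mathbf u)\in\mathcal X,\ \phi(N;z,\mathbf u)\in\mathcal Z_\f\}$ is closed- and hence compact-valued, and is both inner and outer semicontinuous at interior points of $\mathcal Z_N$, so Berge's maximum theorem (equivalently \citet[Ch.~2.4]{rawlings2017model}) yields continuity.

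For part~2, the lower bounds come from nonnegativity of every summand. Since $z_t=z$, the first stage cost equals $\ell(Cz,\ub_z^*(0))=\|Cz\|_Q^2+\|\ub_z^*(0)\|_R^2$; dropping the remaining nonnegative terms gives $V_N^*(z)\ge\lambda_Q\|Cz\|^2$, while retaining instead all input penalties gives $V_N^*(z)\ge\sum_{k=0}^{N-1}\|\ub_z^*(k)\|_R^2\ge\lambda_R\|\ub_z^*\|^2\ge\lambda_R\|\ub_z^*(0)\|^2$. For the upper bound I would evaluate the objective at the terminal-controller candidate above: $V_N^*(z)\le\sum_{k=0}^{N-1}\ell(CA_K^{\,k}z,KA_K^{\,k}z)+V_\f(A_K^{\,N}z)=z^\tr P_N z$, where $P_N:=\sum_{k=0}^{N-1}(A_K^{\,k})^\tr(C^\tr QC+K^\tr RK)A_K^{\,k}+(A_K^{\,N})^\tr\hP A_K^{\,N}\succ 0$ is a finite sum (well-defined since $A_K$ is Schur). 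This candidate is admissible on the same neighborhood of $\mathbb{0}$ found in part~1, giving the bound there with $c_z=\sigma_{\max}(P_N)$; it then extends to all of $\mathcal Z_N$ by enlarging $c_z$, using continuity and compactness of the feasible region to bound $V_N^*$ on the part of $\mathcal Z_N$ away from the origin.

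The routine parts are the two lower bounds and $\mathbb{0}\in\mathcal Z_N$. I expect the main obstacle to be the continuity claim: it rests on inner semicontinuity of the feasible-input correspondence, which is precisely what can fail on $\partial\mathcal Z_N$ and explains why continuity is asserted only on the interior; the care lies in invoking the right parametric-optimization lemma and checking that a constraint qualification holds at interior points. A secondary subtlety is making the quadratic upper bound uniform over $\mathcal Z_N$ rather than merely local — immediate when $\mathcal Z_N$ is bounded, and otherwise handled by the same continuity/compactness reasoning together with the fact that near the origin $V_N^*$ coincides with an unconstrained finite-horizon quadratic.
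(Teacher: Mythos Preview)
The proposal is correct and matches the paper's approach: the paper does not supply its own proof of \Cref{prop:boundedness}, instead stating that these properties are ``standard in stabilizing linear MPC'' and citing \citet[Ch.~2.4]{rawlings2017model}, and your plan is precisely to port those results to the lifted linear model $z^+=Az+Bu$. Your sketch is in fact more detailed than what the paper provides, and it correctly flags the two genuinely nontrivial points (inner semicontinuity of the feasible-input correspondence for Berge's theorem on $\mathrm{int}(\mathcal Z_N)$, and uniformity of the quadratic upper bound over a possibly unbounded $\mathcal Z_N$).
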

\vspace{-5mm}
\begin{prop}\longthmtitle{Recursive feasibility and cost decrease}
\label{prop:rec-feasibility}
Consider the one-step Koopman prediction $\bar{z}^+ = Az + B \kappa(z)$, where $\kappa(z)$ is the Koopman MPC law \cref{eq:Koopman-MPC-law}. Then, we have: 
\vspace{-2mm}
\begin{enumerate}
    \item $\bar{z}^+ \in \mathcal{Z}_N$, \emph{i.e.}, there exists a feasible input sequence to  \cref{eq:Koopman-MPC} with initial state $\bar{z}^+$. One such feasible choice is the one-step shifted input sequence $\ubb := (\ub_z^*(1:N-1), \kappa_\f(\phi(N;z,\ub_z^*)))$;
\vspace{-8mm}
    \item The optimal value function $V_N^*$ decreases at the next Koopman prediction $\bar{z}^+$, \emph{i.e.},
    \begin{equation} \label{eq:decrease-Vn-Koopman}
    V_N^*(\bar{z}^+) \le V_N(\bar{z}^+, \ubb) 
    \le V_N^*(z) -\lambda_Q \|Cz\|^2, \quad \forall z \in \mathcal{Z}_N.
    \end{equation}
\end{enumerate}
\end{prop}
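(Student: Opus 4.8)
\textbf{Proof proposal for \Cref{prop:rec-feasibility}.}
The plan is the standard shifted-sequence argument for stabilizing MPC, carried out entirely in the \emph{nominal} lifted linear model $z^+ = Az + Bu$, so the Koopman modeling error \cref{eq:process-error-term} never enters. Fix $z \in \mathcal{Z}_N$, let $\ub_z^*$ be an optimal input sequence for \cref{eq:Koopman-MPC} at $z$, and write $z_{t+k} := \phi(k;z,\ub_z^*)$ for the associated nominal trajectory, so that $z_{t+N} \in \mathcal{Z}_\f$ and $Cz_{t+k}\in\mathcal{X}$ for $k = 0,\dots,N-1$. The first observation is that, because the predictor \cref{eq:Koopman-MPC-cons-1} is linear and deterministic and $\kappa(z) = \ub_z^*(0)$, the one-step Koopman prediction satisfies the exact identity $\bar{z}^+ = Az + B\kappa(z) = z_{t+1}$ (this is the nominal prediction $\bar z^+$, not the true successor $z^+ = \Psi(x^+)$).

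For part (1), I would propagate $\bar{z}^+$ under the shifted sequence $\ubb = (\ub_z^*(1:N-1), \kappa_\f(\phi(N;z,\ub_z^*)))$. By linearity of \cref{eq:Koopman-MPC-cons-1}, the resulting nominal trajectory reproduces $z_{t+1},\dots,z_{t+N}$ as its states of index $0,\dots,N-1$ and appends $A_K z_{t+N}$ as its terminal state. Input constraints hold: $\ub_z^*(k)\in\mathcal{U}$ for $k=1,\dots,N-1$ by feasibility of $\ub_z^*$, and $\kappa_\f(z_{t+N})\in\mathcal{U}$ by \Cref{prop:terminal} since $z_{t+N}\in\mathcal{Z}_\f$. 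State constraints $Cz_{t+k}\in\mathcal{X}$ for $k=1,\dots,N-1$ hold by feasibility of $\ub_z^*$; for the remaining one, $Cz_{t+N}\in\mathcal{X}$, I would invoke $C\mathcal{Z}_\f\subseteq\mathcal{X}$, which holds for the chosen $\tau$ (shrinking $\tau$ if necessary, possible since $\mathbb{0}\in\mathrm{int}(\mathcal{X})$ and $C\mathcal{Z}_\f\to\{\mathbb{0}\}$ as $\tau\to 0$). Finally, terminal-set membership $A_K z_{t+N}\in\mathcal{Z}_\f$ follows from \Cref{prop:terminal}: \cref{eqn:terminal-condi} together with a positive stage cost gives $V_\f(A_K z')\le V_\f(z')\le\tau$ for every $z'\in\mathcal{Z}_\f$, i.e., $\mathcal{Z}_\f$ is invariant under $A_K$. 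Hence $\ubb$ is feasible for \cref{eq:Koopman-MPC} at $\bar{z}^+$, so $\bar{z}^+\in\mathcal{Z}_N$.

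For part (2), I would compare the two costs term by term. Since the middle stage-cost terms $l(Cz_{t+k},\ub_z^*(k))$, $k=1,\dots,N-1$, appear in both $V_N(\bar{z}^+,\ubb)$ and $V_N^*(z)=V_N(z,\ub_z^*)$ and cancel,
\[
V_N(\bar{z}^+,\ubb) - V_N^*(z) = -\,l(Cz,\ub_z^*(0)) + l(Cz_{t+N},\kappa_\f(z_{t+N})) + V_\f(A_K z_{t+N}) - V_\f(z_{t+N}).
\]
By \cref{eqn:terminal-condi} the last three terms sum to $\le 0$, while $l(Cz,\ub_z^*(0)) = \|Cz\|_Q^2 + \|\ub_z^*(0)\|_R^2 \ge \lambda_Q\|Cz\|^2$, so $V_N(\bar{z}^+,\ubb) \le V_N^*(z) - \lambda_Q\|Cz\|^2$. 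Optimality of $V_N^*$ at $\bar{z}^+$ (which lies in $\mathcal{Z}_N$ by part (1)) gives $V_N^*(\bar{z}^+) \le V_N(\bar{z}^+,\ubb)$, establishing the chain \cref{eq:decrease-Vn-Koopman}.

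The argument is essentially routine once the terminal ingredients of \Cref{prop:terminal} are in hand; the two points needing care are (i) the exact identity $\bar{z}^+ = z_{t+1}$, which relies on working with the nominal linear predictor rather than the true nonlinear dynamics, and (ii) verifying that the terminal set both respects the state constraints ($C\mathcal{Z}_\f\subseteq\mathcal{X}$) and is $A_K$-invariant. I expect (ii) — in particular being explicit that $\tau$ is taken small enough for $C\mathcal{Z}_\f\subseteq\mathcal{X}$, since the construction \cref{eq:terminal-set} as stated only enforces the input-constraint condition — to be the main thing to pin down carefully.
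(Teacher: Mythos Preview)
Your proposal is correct and is exactly the standard shifted-sequence argument the paper has in mind: the paper does not give its own proof of \Cref{prop:rec-feasibility} but simply refers to \citet[Ch.~2.4]{rawlings2017model}, and your argument is precisely that one, specialized to the lifted linear predictor. Your observation in point (ii) is also well taken---the terminal-set construction \cref{eq:terminal-set} as written only enforces $\kappa_\f(\mathcal{Z}_\f)\subseteq\mathcal{U}$ and not $C\mathcal{Z}_\f\subseteq\mathcal{X}$, so one indeed needs to shrink $\tau$ further (or note, as the paper does later in \Cref{subsec:exp-stable}, that the state constraint is relaxed) to make the feasibility argument go through verbatim.
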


\vspace{-4mm}
These properties are closely related to the requirements on a Lyapunov function in \Cref{lemma:LES}. However, we note that the optimal value function $V_N^*$ only has a decrease in terms of $\|Cz\|^2$, instead of $\|z\|^2$. If the Koopman model is exact, the next result shows that the Koopman MPC asymptotically stabilizes the physical state $x$ of the original nonlinear system~\cref{eqn:nonlinear}. 
\begin{thm}
\label{prop:output-stabi}
    Suppose $ e(Cz_t,u_t) \equiv \mathbb{0}$ in \cref{eq:process-error-term} and \Cref{assum:nonlinear-prop,assum:Koop-obser} hold. Consider the Koopman MPC \cref{eq:Koopman-MPC} with terminal design $V_\f(\cdot)$ in \cref{eq:terminal-cost} and $\mathcal{Z}_\f$ in \cref{eq:terminal-set}.
    Then,
    \vspace{-2mm}
    \begin{enumerate}
           \item  The entire feasible region of the Koopman MPC \cref{eq:Koopman-MPC}, $\mathcal{X}_N:= \{x \in \mathbb{R}^n \ | \  \Psi(x) \in \mathcal{Z}_N\}$, is an ROA of the closed-loop system $x_{t+1} = f(x_t, \kappa(\Psi(x_t)))$;
           \vspace{-2mm}
   \item The closed-loop system is also locally asymptotically stable at the origin.
    \end{enumerate}
\end{thm}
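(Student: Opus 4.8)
The plan is to transfer the linear-MPC properties of \Cref{prop:boundedness,prop:rec-feasibility} to the true nonlinear closed loop, which is legitimate precisely because $e\equiv\mathbb{0}$. First I would note that with zero modeling error the lift of the physical successor coincides with the one-step Koopman prediction: if $x^+ = f(x,\kappa(\Psi(x)))$ and $z=\Psi(x)$, then $z^+:=\Psi(x^+) = A\Psi(x)+B\kappa(z) = \bar z^+$. Hence along any closed-loop trajectory $x_{t+1}=f(x_t,\kappa(\Psi(x_t)))$ the lifted sequence $z_t := \Psi(x_t)$ obeys exactly the nominal recursion $z_{t+1}=Az_t+B\kappa(z_t)$. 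Recursive feasibility (\Cref{prop:rec-feasibility}, item~1) then gives $z_{t+1}=\bar z_t^+\in\mathcal{Z}_N$ whenever $z_t\in\mathcal{Z}_N$, so by induction the whole trajectory stays in $\mathcal{Z}_N$, i.e.\ $x_t\in\mathcal{X}_N$ for all $t$ provided $x_0\in\mathcal{X}_N$; in particular the MPC law is well defined along the trajectory. Note also $\mathbb{0}\in\mathrm{int}(\mathcal{X}_N)$, since $\mathbb{0}\in\mathrm{int}(\mathcal{Z}_N)$ by \Cref{prop:boundedness} and $\Psi$ is continuous with $\Psi(\mathbb{0})=\mathbb{0}$.

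For part~1, I would apply the value-decrease bound \cref{eq:decrease-Vn-Koopman} at each step: $V_N^*(z_{t+1})\le V_N^*(z_t)-\lambda_Q\|Cz_t\|^2 = V_N^*(z_t)-\lambda_Q\|x_t\|^2$. Telescoping over $t=0,\dots,T$ and using $V_N^*\ge 0$ yields $\sum_{t=0}^{T}\|x_t\|^2 \le V_N^*(z_0)/\lambda_Q$ for every $T$, hence $\sum_{t\ge 0}\|x_t\|^2<\infty$ and $x_t\to\mathbb{0}$. Since $x_0\in\mathcal{X}_N$ was arbitrary, $\mathcal{X}_N$ is a region of attraction.

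For part~2 it remains to establish Lyapunov stability of the origin, which together with part~1 gives local asymptotic stability. I would set $W(x):=V_N^*(\Psi(x))$ on $\mathcal{X}_N$. From \Cref{prop:boundedness}, item~2, and $x=C\Psi(x)$ we get the quadratic lower bound $W(x)\ge\lambda_Q\|x\|^2$; moreover $W(\mathbb{0})=0$ and $W$ is continuous at the origin (the continuous map $\Psi$ sends a neighborhood of $\mathbb{0}$ into $\mathrm{int}(\mathcal{Z}_N)$, where $V_N^*$ is continuous). The computation above also shows $W$ is non-increasing along the closed loop: $W(x_{t+1})-W(x_t)\le-\lambda_Q\|x_t\|^2\le 0$. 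Then the standard $\varepsilon$--$\delta$ argument applies: given $\varepsilon>0$ small enough that $\mathcal{B}_\varepsilon\subseteq\mathcal{X}_N$, choose $\delta>0$ with $\mathcal{B}_\delta\subseteq\mathcal{X}_N$ and $W(x)<\lambda_Q\varepsilon^2$ for $\|x\|<\delta$ (possible by continuity of $W$ at $\mathbb{0}$); then for $\|x_0\|<\delta$ we have $W(x_t)\le W(x_0)<\lambda_Q\varepsilon^2$ for all $t$, and since $W(x_t)\ge\lambda_Q\|x_t\|^2$ this forces $\|x_t\|<\varepsilon$ for all $t$.

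The main obstacle, and the reason this cannot be dispatched by a black-box invocation of \Cref{lemma:LES}, is that $V_N^*$ decreases only in $\|Cz\|^2=\|x\|^2$ and not in $\|z\|^2$, and \Cref{assum:Koop-obser} provides no quadratic upper bound on $W$ in terms of $\|x\|^2$ (only continuity at $\mathbb{0}$). So I cannot conclude exponential stability, and the Lyapunov-stability step must be carried out by hand, using the quadratic lower bound for trapping and mere continuity of $W$ at the origin for the small-initial-condition side, rather than the two-sided quadratic sandwich of \Cref{lemma:LES}. The second delicate point is the very first step: the nominal-prediction results \Cref{prop:boundedness,prop:rec-feasibility} must be shown to apply along the \emph{physical} trajectory, which hinges entirely on $e\equiv\mathbb{0}$ so that $z^+=\bar z^+$ and feasibility/decrease propagate to the true closed loop.
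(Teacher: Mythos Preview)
Your proposal is correct and follows essentially the same approach as the paper: use $e\equiv\mathbb{0}$ to identify $\Psi(x^+)=\bar z^+$, transfer recursive feasibility and the value-decrease \cref{eq:decrease-Vn-Koopman} to the physical trajectory, telescope to obtain attractivity on $\mathcal{X}_N$, and then run the sublevel-set $\varepsilon$--$\delta$ argument (continuity of $V_N^*\!\circ\Psi$ at $\mathbb{0}$ plus the quadratic lower bound) for Lyapunov stability. Your articulation of why \Cref{lemma:LES} cannot be invoked directly---no quadratic upper bound on $W$ under \Cref{assum:Koop-obser} alone---is exactly the point the paper makes elsewhere, and your identification $z^+=\bar z^+$ is in fact slightly cleaner than the paper's phrasing.
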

\vspace{-2mm}

We prove that $\mathcal{X}_N$ is a region of attraction via showing it is invariant and the running cost of MPC with the optimal Koopman control law is finite. For the Lyapunov stability part, the proof of \Cref{prop:output-stabi} is similar to that in \Cref{proposition:global-stable}. The detailed proof is shown in \Cref{Appendix:output-stabi}.

Analogously to \Cref{proposition:global-stable}, \Cref{assum:nonlinear-prop,assum:Koop-obser} do not guarantee \emph{local exponential stability} of the closed-loop~Koopman MPC system, and additional regularity of the lifting $\Psi$~is~required.
\vspace{-2mm}
\begin{rem}\longthmtitle{Enlarging ROA via Koopman MPC}
With state and input constraints, the MPC $N$-step feasible set
$
\mathcal{X}_N\! :=\! \{x \! \in\!\mathbb{R}^n \! \mid \! \Psi(x)\in \mathcal{Z}_N \}
$ 
includes the ROA from the Koopman LQR control law because $\mathcal{Z}_\f \subseteq \mathcal{Z}_N$. Consequently, Koopman-MPC naturally \emph{enlarges~the ROA} of the nonlinear closed-loop system while enforcing constraints. This is in direct analogy with standard stabilizing MPC for linear systems; see \citep[Ch. 2.5]{rawlings2017model}. \hfill $\square$
\end{rem}

\vspace{-4mm}
\subsection{Local exponential stability of stabilizing Koopman MPC}
\label{subsec:exp-stable}
\vspace{-1mm}
We show the Koopman MPC locally exponentially stabilizes the nonlinear system under suitable assumptions. We relax the state constraint $\mathcal{X}$ in \eqref{eq:Koopman-MPC} for simplicity ($\mathcal{Z}_N$ and $\mathcal{X}_N$ change accordingly). Its satisfaction can be addressed through a more involved discussion, which we defer to future work. Similar to \Cref{assum:pred-erro}, we make the following assumption for the prediction~error. 
\vspace{-1mm}
\begin{assum}
\label{assum:error-bound}
     There exists constants $r>0$ and $L>0$, such that the closed-loop one-step prediction error $e_\C(x)$ in~\cref{eq:process-error-term} with the Koopman MPC law~\cref{eq:Koopman-MPC-law} satisfies
     \begin{equation} \label{eq:error-bound}
     \|e_\C(x)\| \le L \|x\|, \quad \forall x \in \mathcal{B}_r.
     \end{equation}
\end{assum}
\vspace{-5mm}

Let $\rho \in (0, \lambda_Q \hat{r}^2]$ where $\hat{r} := \min\{r_\psi, r\}$ with $r_\psi$ and $r$ from \Cref{assum:lift-cd} and \Cref{assum:error-bound} respectively. We choose the sublevel set  
\begin{equation}
\label{eq:S-ROA}
\mathcal{S} := \{x \in \mathcal{X}_N \ | \ V_N^*(\Psi(x)) < \rho\}.
\end{equation}
This set $\mathcal{S}$ is bounded because of $\lambda_Q \|x\|^2 < \rho$ from \Cref{prop:boundedness}, which also implies $\mathcal{S} \subseteq \mathcal{B}_{\hat{r}} \subseteq \mathcal{B}_{r_\psi} \cap \mathcal{B}_r$. Furthermore, $\mathbb{0}$ is an interior point of $\mathcal{S}$ because of $V_N^*(\Phi(\mathbb{0})) = V_N^*(\mathbb{0}) = 0$ and $V_N^*(\Psi(x))$ is continuous (see \Cref{assum:Koop-obser} and \Cref{prop:boundedness}). We will prove that $\mathcal{S}$ is an ROA for the closed-loop system when the Koopman prediction error is sufficiently~small. 

We first prove the one-step feasibility of the Koopman MPC \cref{eq:Koopman-MPC} over the entire $\mathcal{S}$.
\vspace{-1mm}
\begin{lem}
\label{lemma:feasibility}
    Suppose \Cref{assum:nonlinear-prop,assum:Koop-obser,assum:lift-cd,assum:error-bound} hold. Fix  $\rho \in (0, \lambda_Q \hat{r}^2]$. Then, there exists $\delta_1 > 0$ such that, if $L$ in~\cref{eq:error-bound} satisfies $L \le \delta_1$, the Koopman MPC \cref{eq:Koopman-MPC} is feasible at $\Psi(x^+) $ for all $ x \in \mathcal{S}$.
\end{lem}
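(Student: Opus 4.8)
The plan is to show directly that $\Psi(x^+)\in\mathcal{Z}_N$ for every $x\in\mathcal{S}$; since the state constraint $\mathcal{X}$ is relaxed, this is exactly feasibility of \cref{eq:Koopman-MPC} at $\Psi(x^+)$. Fix $x\in\mathcal{S}$ and set $z:=\Psi(x)$. Because $\mathcal{S}\subseteq\mathcal{B}_{\hat r}\subseteq\mathcal{B}_{r_\psi}\cap\mathcal{B}_r$, both \Cref{assum:lift-cd} and \Cref{assum:error-bound} apply at $x$, so $\|e_\C(x)\|\le L\|x\|\le L\hat r$; moreover $x\in\mathcal{X}_N$ gives $z\in\mathcal{Z}_N$, so an optimal sequence $\ub_z^*$ exists with terminal state $y:=\phi(N;z,\ub_z^*)\in\mathcal{Z}_\f$. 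Writing the successor as $z^+:=\Psi(x^+)=\bar z^+ + e_\C(x)$ with $\bar z^+=Az+B\kappa(z)$, the idea is to take the recursively-feasible sequence at $\bar z^+$ from \Cref{prop:rec-feasibility}, apply it at $z^+$ instead, and absorb the resulting terminal mismatch inside $\mathcal{Z}_\f$ provided $L$ is small.

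Concretely, the candidate is the one-step-shifted sequence $\ubb=(\ub_z^*(1:N-1),\kappa_\f(y))$. Input feasibility at $z^+$ is automatic: its entries do not depend on the initial state, $\ub_z^*(1:N-1)\in\mathcal{U}^{N-1}$, and $\kappa_\f(y)=Ky\in\mathcal{U}$ by \Cref{prop:terminal} since $y\in\mathcal{Z}_\f$; the state constraints are relaxed. Because the nominal predictor is affine in the initial state, $\phi(N;z^+,\ubb)=\phi(N;\bar z^+,\ubb)+A^N e_\C(x)$, and a direct computation with the linear predictor gives $\phi(N;\bar z^+,\ubb)=A_K y$. So the whole argument reduces to choosing $\delta_1>0$ so that $V_\f(A_K y+A^N e_\C(x))\le\tau$ whenever $L\le\delta_1$.

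For this I would establish a uniform slack for $A_K y$ inside $\mathcal{Z}_\f$. From the Lyapunov equation $A_K^\tr\hP A_K-\hP+\hat Q=0$ we get $V_\f(A_K y)=y^\tr(\hP-\hat Q)y=V_\f(y)-y^\tr\hat Q y$, and since $\hP=\sum_{k\ge0}(A_K^k)^\tr\hat Q A_K^k\succeq\hat Q\succ0$ one has $\sigma_\hP\ge\lambda_{\min}(\hat Q)>0$; combining $y^\tr\hat Q y\ge\lambda_{\min}(\hat Q)\|y\|^2\ge(\lambda_{\min}(\hat Q)/\sigma_\hP)\,V_\f(y)$ yields $V_\f(A_K y)\le\beta V_\f(y)\le\beta\tau$ with $\beta:=1-\lambda_{\min}(\hat Q)/\sigma_\hP\in[0,1)$. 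Expanding the quadratic form, $V_\f(A_K y+A^N e_\C(x))=V_\f(A_K y)+2(A^N e_\C(x))^\tr\hP A_K y+V_\f(A^N e_\C(x))\le\beta\tau+c_1 L+c_2 L^2$, where $c_1,c_2$ depend only on $\hP,A,A_K,\tau,\hat r$, using $\|A^N e_\C(x)\|\le\|A\|^N L\hat r$ and $\|y\|\le\sqrt{\tau/\lambda_{\min}(\hP)}$ (from $y\in\mathcal{Z}_\f$). Choosing $\delta_1>0$ with $c_1\delta_1+c_2\delta_1^2\le(1-\beta)\tau$ then gives $V_\f(\phi(N;z^+,\ubb))\le\tau$, i.e.\ $\phi(N;z^+,\ubb)\in\mathcal{Z}_\f$, so $\Psi(x^+)=z^+\in\mathcal{Z}_N$; as $x\in\mathcal{S}$ was arbitrary, the claim follows.

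I expect the margin step to be the main obstacle. Because $A_K y$ can lie on $\partial\mathcal{Z}_\f$, using only $V_\f(A_K y)\le V_\f(y)\le\tau$ is not enough — an arbitrarily small perturbation in a bad direction would leave $\mathcal{Z}_\f$. The strictness $\hat Q\succ0$ built into the terminal-cost design (recall $\hat Q\succeq C^\tr QC+K^\tr RK$ with $\hat Q\succ0$) is precisely what upgrades the decrease to the contraction $V_\f(A_K y)\le\beta\tau$ with $\beta<1$, opening the slack $(1-\beta)\tau>0$ that the one-step error must fit inside. The relaxation of $\mathcal{X}$ is also what spares us from having to establish analogous margins at the intermediate predicted states $\phi(k;z^+,\ubb)$, $k<N$.
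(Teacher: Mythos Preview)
Your proof is correct and follows the same overall skeleton as the paper: you use the same shifted candidate $\ubb$, reduce feasibility at $z^+=\Psi(x^+)$ to the terminal-set condition $V_\f(\phi(N;z^+,\ubb))\le\tau$, and split that into the nominal terminal value at $\phi(N;\bar z^+,\ubb)=A_Ky$ plus a perturbation driven by $A^N e_\C(x)$.

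The execution differs in two places, and yours is arguably cleaner. For the slack, the paper does a case split on whether $V_\f(y)$ lies in $[0,\tau/2)$ or $[\tau/2,\tau]$ to extract a margin $\gamma=\min\{\tau/2,\lambda_{\hat Q}\tau/(2\sigma_{\hP})\}$; your single-line contraction $V_\f(A_Ky)\le\beta\,V_\f(y)\le\beta\tau$ with $\beta=1-\lambda_{\hat Q}/\sigma_{\hP}$, obtained directly from the Lyapunov identity and $\hP\succeq\hat Q$, avoids the split and actually yields the sharper slack $(1-\beta)\tau$. For the perturbation, the paper tracks $\|\bar z^+\|$ and $\|\ubb\|$ via the value-function bounds of \Cref{prop:boundedness}, producing a coefficient $c_3(L)\|x\|^2$, and then caps $\|x\|^2$ on $\mathcal{S}$; you instead bound $\|y\|$ straight from $y\in\mathcal{Z}_\f$ and use $\|e_\C(x)\|\le L\hat r$, which is more direct. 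Either route works for this lemma; note, though, that the $\|x\|^2$-scaling the paper keeps is what is actually needed in the subsequent descent lemma, so the paper's bookkeeping is reused there.
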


Similar to \Cref{prop:rec-feasibility}, this result guarantees the feasibility of \cref{eq:Koopman-MPC} for the Koopman lifting of the true physical state $x^+$. The proof is constructive by showing that $\ubb$ in \Cref{prop:rec-feasibility} is a feasible solution with sufficiently small~$\delta_1$. The details are given in \Cref{append:one-step-feasibility}. 

\Cref{lemma:feasibility} guarantees that $V_N^*(\Psi(x^+))$ is well-defined for $x \in \mathcal{S}$. We next prove that $V_N^*(\Psi(x^+)) < V_N^*(\Psi(x))$ when $x \in \mathcal{S} \setminus \{0\}$ and the constant $L$  in \cref{eq:error-bound} is sufficiently small. 
\begin{lem}
\label{lemma:decent}
    Suppose \Cref{assum:nonlinear-prop,assum:Koop-obser,assum:lift-cd,assum:error-bound} hold. Fix  $\rho \in (0, \lambda_Q \hat{r}^2]$. Then,  there exists $c, \delta_2 >0$, such that if $L$ in~\cref{eq:error-bound} satisfies $L < \delta_2 \le \delta_1$, we have
    \[
    V_N^*(\Psi(x^+)) - V_N^*(\Psi(x)) \le -c \|x\|^2, \ \forall x \in \mathcal{S}.
    \]
\end{lem}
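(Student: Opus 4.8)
The plan is to compare $V_N^*(\Psi(x^+))$ and $V_N^*(\Psi(x))$ through the intermediate point $\bar z^+ := A\Psi(x) + B\kappa(\Psi(x))$, the one-step \emph{nominal} Koopman prediction under the MPC law, exploiting that the true lifted successor differs from it only by the small error $z^+ - \bar z^+ = e_\C(x)$, where we abbreviate $z = \Psi(x)$ and $z^+ = \Psi(x^+)$. Since $Cz = x$ by \Cref{assum:Koop-obser}, \Cref{prop:rec-feasibility} already supplies the ``Koopman decrease'' $V_N(\bar z^+, \ubb) \le V_N^*(z) - \lambda_Q\|x\|^2$, where $\ubb = (\ub_z^*(1:N-1), \kappa_\f(\phi(N;z,\ub_z^*)))$ is the one-step shifted input sequence. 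By \Cref{lemma:feasibility} (valid for $L \le \delta_1$), this same $\ubb$ is feasible at $z^+$: the inputs are open-loop, so $\ubb \in \mathcal{U}^N$ independently of the initial state, the state constraint is relaxed here, and only the terminal constraint must be rechecked, which is exactly what \Cref{lemma:feasibility} provides. Hence $V_N^*(z^+) \le V_N(z^+, \ubb)$ and
\[
V_N^*(\Psi(x^+)) - V_N^*(\Psi(x)) \;\le\; \big[\,V_N(z^+,\ubb) - V_N(\bar z^+, \ubb)\,\big] \;-\; \lambda_Q\|x\|^2 .
\]

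The remaining task is to show the bracketed perturbation term is $O(L)\|x\|^2$. First I would gather magnitude bounds on $\mathcal{S}$: since $\mathcal{S} \subseteq \mathcal{B}_{\hat{r}} \subseteq \mathcal{B}_{r_\psi} \cap \mathcal{B}_r$, \Cref{assum:lift-cd} gives $\|z\| \le L_\psi\|x\|$, \Cref{assum:error-bound} gives $\|e_\C(x)\| \le L\|x\|$, and \Cref{prop:boundedness} gives $\|\ub_z^*\| \le \sqrt{V_N^*(z)/\lambda_R} \le \sqrt{c_z/\lambda_R}\,\|z\|$; note that $\bar z^+$, $\ub_z^*$ and $\ubb$ are all independent of $L$. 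Propagating these through the linear predictor \cref{eq:Koopman-MPC-cons-1} bounds the entire nominal trajectory, $\|\phi(k;\bar z^+,\ubb)\| \le c_1\|x\|$ for an $L$-independent constant $c_1$ and all $k$. The key structural point is that $\phi(\cdot;z^+,\ubb)$ and $\phi(\cdot;\bar z^+,\ubb)$ are driven by the \emph{same} inputs, so they differ only by the free response $\phi(k;z^+,\ubb) - \phi(k;\bar z^+,\ubb) = A^k e_\C(x)$, of magnitude $O(L\|x\|)$. Expanding each stage-cost difference (weight $C^\tr Q C$) and the terminal-cost difference (weight $\hP$) via $\|a\|_M^2 - \|b\|_M^2 = (a-b)^\tr M(a+b)$ and $\|a+b\| \le 2\|b\| + \|a-b\|$, each of the $N+1$ terms is bounded by a constant times $L\|x\|^2$ (say for $L \le 1$); summing gives $V_N(z^+,\ubb) - V_N(\bar z^+, \ubb) \le \Gamma L\|x\|^2$ with $\Gamma$ depending only on $Q, \hP, A, C, N, c_1$.

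Combining the two estimates, $V_N^*(\Psi(x^+)) - V_N^*(\Psi(x)) \le -(\lambda_Q - \Gamma L)\|x\|^2$ on $\mathcal{S}$, and the lemma follows by setting $\delta_2 := \min\{\delta_1, 1, \lambda_Q/(2\Gamma)\}$ and $c := \lambda_Q - \Gamma L \ge \lambda_Q/2 > 0$ whenever $L < \delta_2$. I expect the main obstacle to lie in the bookkeeping of the perturbation step: one must ensure that \emph{every} error contribution genuinely scales like $\|x\|^2$ --- not $\|x\|$ or a constant --- which relies on $e_\C(x)$ \emph{and} the nominal predicted states and inputs being simultaneously $O(\|x\|)$ on $\mathcal{S}$ (the former from \Cref{assum:error-bound}, the latter from the quadratic bounds in \Cref{prop:boundedness} together with the local boundedness of $\Psi$). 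Only then is the perturbation comparable to, and for small $L$ strictly dominated by, the Koopman decrease $\lambda_Q\|x\|^2$. A subsidiary point not to overlook is keeping $\delta_2 \le \delta_1$, so that \Cref{lemma:feasibility} keeps $\ubb$ admissible at $z^+$.
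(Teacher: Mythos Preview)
Your proposal is correct and follows essentially the same approach as the paper's proof: both decompose $V_N^*(z^+) - V_N^*(z)$ through the shifted candidate $\ubb$ at the nominal successor $\bar z^+$, invoke the Koopman decrease from \Cref{prop:rec-feasibility}, and bound the perturbation $V_N(z^+,\ubb) - V_N(\bar z^+,\ubb)$ by $O(L)\|x\|^2$ using \Cref{assum:lift-cd,assum:error-bound} and the quadratic bounds of \Cref{prop:boundedness}. The only cosmetic difference is that the paper writes $V_N(z,\ub)$ in stacked matrix form $(O_N z + T_N\ub)^\tr \bar Q(O_N z + T_N\ub) + \ub^\tr\bar R\ub$ and bounds the difference in one shot, whereas you expand stage-by-stage via $\phi(k;z^+,\ubb)-\phi(k;\bar z^+,\ubb)=A^k e_\C(x)$; these are the same computation.
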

\vspace{-6mm}

The proof of this result is similar to the decomposition of the Lyapunov change in \Cref{them:exp-LQR}. The one-step feasibility in \Cref{lemma:feasibility} and the descent property in \Cref{lemma:decent} can hold for any state in $\mathcal{S}$. This leads to the recursive feasibility and local exponential stability of the closed-loop system.
\begin{thm}
\label{them:exp-stable}
    Suppose \Cref{assum:nonlinear-prop,assum:Koop-obser,assum:lift-cd,assum:error-bound} hold. Fix a  $\rho \in (0, \lambda_Q \hat{r}^2]$ and define the sublevel set $\mathcal{S}$ in~\cref{eq:S-ROA}. There exists $\delta > 0$ such that if $L$ in \cref{eq:error-bound} satisfies $L \le \delta$, the closed-loop Koopman MPC system~\cref{eq:closed-loop-Koopman} is locally exponentially stable and the set $\mathcal{S}$ is an ROA of it.  
\end{thm}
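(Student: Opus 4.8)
The plan is to check that the composite function $V(x):=V_N^*(\Psi(x))$ is a Lyapunov function for the closed-loop map $g(x):=f(x,\kappa(\Psi(x)))$ on the sublevel set $\mathcal{S}$ in~\cref{eq:S-ROA}, and then invoke \Cref{lemma:LES}. Concretely, I would fix $\delta<\delta_2$ with $\delta_2\le\delta_1$ as in \Cref{lemma:decent,lemma:feasibility}, so that both one-step lemmas are in force whenever $L\le\delta$. It then remains to establish the three hypotheses of \Cref{lemma:LES} for $V$ on $\mathcal{D}=\mathcal{S}$: the quadratic sandwich bounds, the invariance of $\mathcal{S}$ (with $\mathbb{0}\in\mathrm{int}(\mathcal{S})$, already noted before the statement), and the quadratic decrease of $V$ along $g$.

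For the bounds, recall $\mathcal{S}\subseteq\mathcal{B}_{\hat r}\subseteq\mathcal{B}_{r_\psi}\cap\mathcal{B}_r$. On $\mathcal{B}_{r_\psi}$, \Cref{assum:lift-cd} gives $\|\Psi(x)\|\le L_\psi\|x\|$, so the upper bound in \Cref{prop:boundedness} yields $V(x)=V_N^*(\Psi(x))\le c_z\|\Psi(x)\|^2\le c_zL_\psi^2\|x\|^2$; using $x=C\Psi(x)$ and the lower bound in \Cref{prop:boundedness}, $V(x)\ge\lambda_Q\|C\Psi(x)\|^2=\lambda_Q\|x\|^2$. Hence \cref{eq:quad-bounds} holds with $\alpha_1=\lambda_Q$ and $\alpha_2=c_zL_\psi^2$, and since $V\ge 0$ it maps $\mathcal{S}$ into $\mathbb{R}_+$. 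The decrease \cref{eq:decrease} with $\alpha_3=c$ is exactly \Cref{lemma:decent}. The crux is invariance: for any $x\in\mathcal{S}$, \Cref{lemma:feasibility} gives $\Psi(x^+)\in\mathcal{Z}_N$, hence $x^+\in\mathcal{X}_N$, while \Cref{lemma:decent} gives $V_N^*(\Psi(x^+))\le V_N^*(\Psi(x))-c\|x\|^2\le V_N^*(\Psi(x))<\rho$; the two facts together are precisely $x^+\in\mathcal{S}$ (and for $x=\mathbb{0}$ this degenerates to $V(x^+)=0<\rho$ with $x^+\in\mathcal{X}_N$, so again $x^+\in\mathcal{S}$).

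With the hypotheses in place, \Cref{lemma:LES} applied to $g$, $V$, and $\mathcal{S}$ gives local exponential stability of~\cref{eq:closed-loop-Koopman} with $\mathcal{S}$ a region of attraction, and the decay constant and rate are those generated by $\alpha_1,\alpha_2,\alpha_3$ as in \Cref{lemma:LES}. I expect the main obstacle to be the invariance/recursive-feasibility step: \Cref{lemma:feasibility,lemma:decent} are only one-step statements (valid, however, uniformly on all of $\mathcal{S}$), and the argument above must verify that the closed loop never leaves $\mathcal{S}$ so that feasibility and descent propagate along the entire trajectory; this relies on the sublevel structure of $\mathcal{S}$ and on the strict inequality $V_N^*(\Psi(x^+))<\rho$. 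A secondary, more bookkeeping point is keeping $\mathcal{S}\subseteq\mathcal{B}_{r_\psi}\cap\mathcal{B}_r$ throughout, which is already ensured by the choice $\rho\in(0,\lambda_Q\hat r^2]$ together with the lower bound $\lambda_Q\|x\|^2\le V_N^*(\Psi(x))$.
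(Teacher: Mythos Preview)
Your proposal is correct and follows essentially the same route as the paper's proof: take $V_N^*(\Psi(\cdot))$ as a Lyapunov candidate, pick $\delta<\delta_2\le\delta_1$ so that \Cref{lemma:feasibility,lemma:decent} apply, derive the quadratic sandwich from \Cref{prop:boundedness} together with \Cref{assum:lift-cd}, use the descent lemma for \cref{eq:decrease} and to force $x^+\in\mathcal{S}$ (invariance/recursive feasibility), and conclude via \Cref{lemma:LES}. Your write-up is in fact slightly more careful than the paper's in making explicit that $x^+\in\mathcal{S}$ requires both $\Psi(x^+)\in\mathcal{Z}_N$ (from \Cref{lemma:feasibility}) and $V_N^*(\Psi(x^+))<\rho$ (from \Cref{lemma:decent}).
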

\begin{proof}
    Consider the optimal value function $V_N^*(\Psi(\cdot))$ as a Lyapunov candidate. We next prove that 1) the set $\mathcal{S}$ is invariant, and 2) $V_N^*(\Psi(x))$ satisfies the conditions \cref{eq:quad-bounds} and \cref{eq:decrease} over $\mathcal{S}$ in \Cref{lemma:LES}. 
    
    We can choose $\delta < \delta_2 \le \delta_1$, where $\delta_1$ and $\delta_2$ come from \Cref{lemma:feasibility} and \Cref{lemma:decent}. Then, from \Cref{lemma:decent}, we have the following decent property
    \begin{equation}
    \label{eqn:sk-decrease}
    V_N^*(\Psi(x^+)) - V_N^*(\Psi(x)) \le -c \|x\|^2, \ \forall x \in \mathcal{S},
    \end{equation}
    which implies $V_N^*(\Psi(x^+)) \le V_N^*(\Psi(x)) < \rho$ and $x^+ \in \mathcal{S}$. This guarantees that 1) the set $\mathcal{S}$ is invariant and 2) the Koopman MPC \cref{eq:Koopman-MPC} is recursive feasible.

    Furthermore, for any $x$ in $\mathcal{S}$, we have
    \begin{equation}
    \label{eqn:sk-bound}
    \lambda_Q \|x\|^2 \le V_N^*(\Psi(x)) \le c_z \|\Psi(x)\|_2^2 \le c_z L_\psi^2 \|x\|_2^2,
    \end{equation}
    where the last inequality comes from \Cref{assum:lift-cd}. Inequalities \cref{eqn:sk-decrease} and \cref{eqn:sk-bound} confirm that the value function $V_N^*(\Psi(x))$ satisfies \Cref{eq:quad-bounds,eq:decrease} in \Cref{lemma:LES}. This completes the proof. 
\end{proof}

\vspace{-2mm}
The proof of \Cref{them:exp-stable} closely parallels that of \Cref{them:exp-LQR}.  
The theoretical upper bound $\delta$ for the error coefficient $L$ depends on all problem data in the Koopman MPC \cref{eq:Koopman-MPC} as well as the set $\mathcal{S}$. While $L$ can be evaluated \emph{a posteriori} once the Koopman model and $\mathcal S$ are fixed, enforcing $L\le\delta$ during identification (\emph{e.g.}, as an explicit constraint in EDMD) is generally nontrivial. A convenient sufficient surrogate is a Taylor-like condition on the one-step prediction error $e(\tilde x)$, with $\tilde x=\operatorname{col}(x,u)$ as in \cref{eq:process-error-term}: $\lim_{\|\tilde{x}\|\to 0}{\|e(\tilde{x})\|}/{\|\tilde{x}\|} =  \mathbb{0}.$ This ensures that the constant $L$ can be chosen arbitrarily close to zero when decreasing $r$ in \Cref{assum:error-bound}. Thus, by shrinking the neighborhood, one can ensure the bound in \Cref{assum:error-bound} with arbitrarily small $L$, and \Cref{them:exp-stable} applies. 

\vspace{-1.5mm}
\begin{rem}[Comparison with existing Koopman MPC] \label{remark:comparison}
\!\!\!Our proof strategies for \Cref{prop:output-stabi,them:exp-stable} are similar to standard stabilizing MPC for linear systems \citep[Chap 2]{rawlings2017model}. The Koopman MPC \cref{eq:Koopman-MPC} can naturally stabilize the Koopman linear model $z^+ = Az + Bu$. With \Cref{assum:lift-cd} and \Cref{assum:error-bound} on the lifting function and the one-step prediction error, the Koopman MPC \cref{eq:Koopman-MPC} can also exponentially stabilize the original nonlinear system. We here compare with some existing results. \citet{zhang2022robust,mamakoukas2022robust} focused on ensuring constraint satisfaction and their settings are closer to the robust MPC framework. In \citet{bold2024data, schimperna2025data}, a variant of Koopman MPC is designed without terminal ingredients, but based on a cost controllability assumption of the nonlinear system. The closest studies in the literature are \citet{worthmann2024data,de2024koopman}, where the Koopman MPC formulations also include suitable terminal ingredients. In particular,  \citet{worthmann2024data} mainly focused on a Koopman \textit{bilinear} model and its terminal ingredients are constructed based on the original nonlinear system.  In \citet{de2024koopman}, the initial condition is interpolated, which may lead to an unbounded prediction error. Both \citet{worthmann2024data} and \citet{de2024koopman} only show practical asymptotic stability. Instead, our \cref{them:exp-stable} establishes the exponential stability of \sk{}. This result only requires one assumption for the actual nonlinear system in \Cref{assum:nonlinear-prop} and several mild assumptions for the Koopman linear model in \Cref{assum:Koop-obser,assum:lift-cd,assum:pred-erro,assum:error-bound}.
\hfill $\square$
\end{rem}

\vspace{-4mm}
\section{Numerical experiments}
\vspace{-1mm}
\label{sec:Num-exp}
In this section, we illustrate the performance of \sk{} on a standard inverted pendulum \citep{strasser2024koopman,zhang2022robust}. We compare \sk{} with a standard linear MPC (\method{L-MPC}) based on a first-order Taylor linearization at the origin. 

\vspace{-2mm}
\subsection{Experiment setup}
\vspace{-1mm}
We consider an inverted pendulum with dynamics 
\begin{equation}
    \label{eq:inverted-pendulum}
    \begin{bmatrix}
        x_1 \\ x_2
    \end{bmatrix}^+ \!=\! 
    \begin{bmatrix}
        1 & T_s \\ 0 & 1-\frac{bT_s}{ml^2}
    \end{bmatrix}
    \! \begin{bmatrix}
        x_1 \\ x_2
    \end{bmatrix} \!+\! \begin{bmatrix}
        0 \\ \frac{T_s}{ml^2}
    \end{bmatrix} u +\! \begin{bmatrix}
        0 \\ \frac{gT_s}{l} \sin(x_1)
    \end{bmatrix}
\end{equation}
where the parameters are $m=1\,\mathrm{kg}$, $l=1\,\mathrm{m}$, $b=0.2$, $g=9.81\,\mathrm{m/s^2}$, and $T_s=0.02\,\mathrm{s}$.  

A Koopman predictor is learned via EDMD using the dictionary $\Psi(x)=\operatorname{col}(x_1,x_2,\sin x_1)$. Specifically, we generate $200$ trajectories of length $1000$; initial states and inputs are sampled from $[-2,2]\times[-8,8]$ and $[-40,40]$, respectively. The identified model that is projected to satisfy \Cref{assum:Koop-obser,assum:error-bound} and the Taylor linearization at the origin are
\[
\begingroup
    \setlength\arraycolsep{3pt}
\def\arraystretch{0.85} 
\textbf{Koopman:} \ A \!=\! \begin{bmatrix}
    1 & 0.02 & 0 \\ 0 & 0.996 & 0.1962 \\ 0.002 & 0.02 & 0.998
\end{bmatrix}, B \!=\! \begin{bmatrix}
    0 \\ 0.02 \\ 0
\end{bmatrix}, \ \textbf{Taylor:} \ A \!=\! \begin{bmatrix}
    1 & 0.02  \\ 0.1962 &  0.996 
\end{bmatrix}, B \!=\! \begin{bmatrix}
    0 \\ 0.02 
\end{bmatrix}.
\endgroup
\]
Both controllers use the stage cost \cref{eq:stage-cost} with $Q=10I$ and $R=I$, and a prediction horizon $N=20$.

\vspace{-3mm}
\subsection{Closed-loop performance}
\begin{wrapfigure}[10]{r}{0.61\textwidth}
\vspace{-7mm}
\centering
{\includegraphics[width=0.3\textwidth]{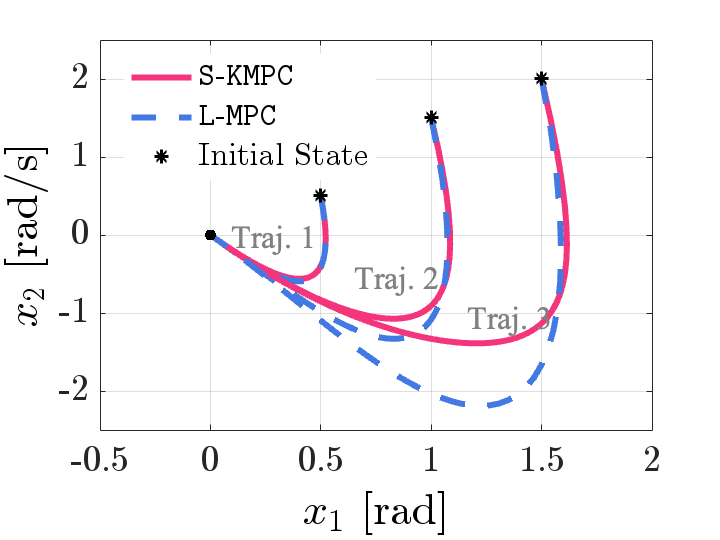} \label{subfig:IP-traj}} \hspace{-3mm}
{\includegraphics[width=0.3\textwidth]{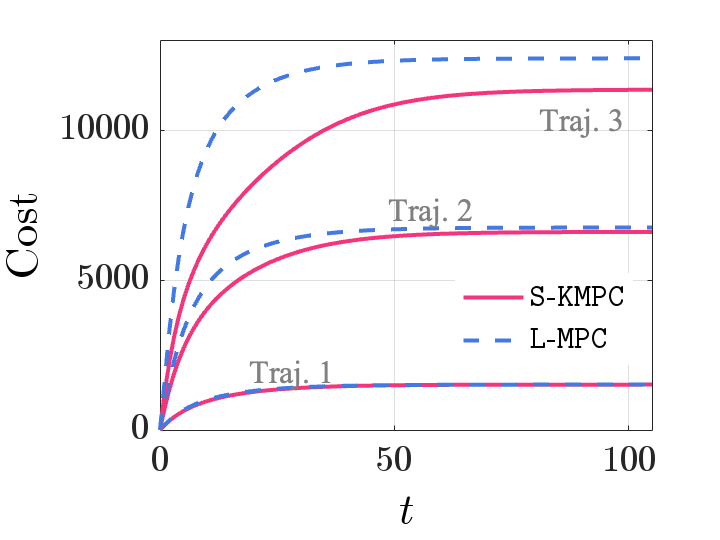}\label{subfig:IP-cost}}
\vspace{-3mm}
\caption{MPC performance for inverted pendulum: approximated Koopman model vs. first-order (Taylor) linearization. Left: closed-loop trajectory; right: accumulated cost.}
\label{fig:IP-MPC}
\end{wrapfigure}

\vspace{-1mm}

We compare \sk{} and \method{L-MPC} on multiple initial conditions. Both controllers employ terminal ingredients designed according to \Cref{subsection:analysis-Koopman-space}. \Cref{fig:IP-MPC} shows phase portraits and accumulated costs. For all initial states, both controllers can stabilize the pendulum, and the accumulated costs converge to steady values. However, as the initial state moves farther from the origin, \method{L-MPC} exhibits larger state deviation than \sk{} and consequently higher accumulated cost. Relative to \sk{}, the cost increase of \method{L-MPC} is approximately $0.13\%$, $2.2\%$, and $9.2\%$ for trajectories 1-3, respectively, growing with the initial distance to the origin.

The rationale for this behavior is intuitive. For small deviations, both the Koopman model and the Taylor model are relatively accurate, leading to similar performance. As the initial state moves away from the linearization point, the Taylor predictor incurs larger modeling error, degrading the performance \method{L-MPC}. In principle, the EDMD-based Koopman predictor may remain reliable over a larger region, and thus \sk{} maintains better trajectories in the transient and lower accumulated costs. These results are consistent with the simulation in \Cref{example:exact-linearization}, cf. \Cref{fig:LQR}.

\vspace{-3mm}
\section{Conclusions}
\label{sec:conclu}
\vspace{-1mm}
We have revisited the local exponential stability of Koopman MPC. In particular, we have presented a stabilizing Koopman MPC variant, where the terminal cost and terminal set are designed based on Koopman LQR. We have shown that the nonlinear closed-loop is locally asymptotically stable when the lifting function and one-step Koopman prediction error are both Lipschitz at the origin, with the Lipschitz constants being small. Numerical simulations confirm the superior performance of the \sk{}, which has a faster convergence rate and a lower accumulated cost. Some future directions include developing data-driven identification methods that can guarantee the satisfaction of the required assumptions, and validating the performance of \sk{} in practical nonlinear systems (\emph{e.g.}, mixed traffic systems \citep{shang2024decentralized} and robotic systems \citep{haggerty2023control}).


\acks{This work is supported by NSF CMMI Award 2320697, NSF CAREER Award 2340713, and ONR Award N00014-23-1-2353.}

\bibliography{l4dc2026_ArXiv}

\begin{thebibliography}{24}
\providecommand{\natexlab}[1]{#1}
\providecommand{\url}[1]{\texttt{#1}}
\expandafter\ifx\csname urlstyle\endcsname\relax
  \providecommand{\doi}[1]{doi: #1}\else
  \providecommand{\doi}{doi: \begingroup \urlstyle{rm}\Url}\fi

\bibitem[Allan et~al.(2017)Allan, Bates, Risbeck, and Rawlings]{allan2017inherent}
Douglas~A Allan, Cuyler~N Bates, Michael~J Risbeck, and James~B Rawlings.
\newblock On the inherent robustness of optimal and suboptimal nonlinear {MPC}.
\newblock \emph{Systems \& Control Letters}, 106:\penalty0 68--78, 2017.

\bibitem[Bold et~al.(2025)Bold, Gr{\"u}ne, Schaller, and Worthmann]{bold2024data}
Lea Bold, Lars Gr{\"u}ne, Manuel Schaller, and Karl Worthmann.
\newblock Data-driven {MPC} with stability guarantees using extended dynamic mode decomposition.
\newblock \emph{IEEE Transactions on Automatic Control}, 70\penalty0 (1):\penalty0 534--541, 2025.

\bibitem[de~Jong et~al.(2024)de~Jong, Breschi, Schoukens, and Lazar]{de2024koopman}
Thomas de~Jong, Valentina Breschi, Maarten Schoukens, and Mircea Lazar.
\newblock {K}oopman data-driven predictive control with robust stability and recursive feasibility guarantees.
\newblock In \emph{2024 IEEE 63rd Conference on Decision and Control (CDC)}, pages 140--145. IEEE, 2024.

\bibitem[Diehl et~al.(2005)Diehl, Bock, and Schl{\"o}der]{diehl2005real}
Moritz Diehl, Hans~Georg Bock, and Johannes~P Schl{\"o}der.
\newblock A real-time iteration scheme for nonlinear optimization in optimal feedback control.
\newblock \emph{SIAM Journal on Control and Optimization}, 43\penalty0 (5):\penalty0 1714--1736, 2005.

\bibitem[Haggerty et~al.(2023)Haggerty, Banks, Kamenar, Cao, Curtis, Mezi{\'c}, and Hawkes]{haggerty2023control}
David~A Haggerty, Michael~J Banks, Ervin Kamenar, Alan~B Cao, Patrick~C Curtis, Igor Mezi{\'c}, and Elliot~W Hawkes.
\newblock Control of soft robots with inertial dynamics.
\newblock \emph{Science Robotics}, 8\penalty0 (81):\penalty0 eadd6864, 2023.

\bibitem[Haseli and Cort{\'e}s(2025)]{haseli2023modeling}
Masih Haseli and Jorge Cort{\'e}s.
\newblock Modeling nonlinear control systems via {K}oopman control family: Universal forms and subspace invariance proximity.
\newblock \emph{Automatica}, 2025.
\newblock To appear.

\bibitem[Koopman(1931)]{koopman1931hamiltonian}
Bernard~O Koopman.
\newblock Hamiltonian systems and transformation in {H}ilbert space.
\newblock \emph{Proceedings of the National Academy of Sciences}, 17\penalty0 (5):\penalty0 315--318, 1931.

\bibitem[Korda and Mezi{\'c}(2018)]{korda2018linear}
Milan Korda and Igor Mezi{\'c}.
\newblock Linear predictors for nonlinear dynamical systems: {K}oopman operator meets model predictive control.
\newblock \emph{Automatica}, 93:\penalty0 149--160, 2018.

\bibitem[Mamakoukas et~al.(2019)Mamakoukas, Castano, Tan, and Murphey]{mamakoukas2019local}
Giorgos Mamakoukas, Maria Castano, Xiaobo Tan, and Todd Murphey.
\newblock Local {K}oopman operators for data-driven control of robotic systems.
\newblock In \emph{Robotics: Science and Systems}, 2019.

\bibitem[Mamakoukas et~al.(2022)Mamakoukas, Di~Cairano, and Vinod]{mamakoukas2022robust}
Giorgos Mamakoukas, Stefano Di~Cairano, and Abraham~P Vinod.
\newblock Robust model predictive control with data-driven {K}oopman operators.
\newblock In \emph{2022 American Control Conference (ACC)}, pages 3885--3892. IEEE, 2022.

\bibitem[Mayne et~al.(2000)Mayne, Rawlings, Rao, and Scokaert]{mayne2000constrained}
David~Q Mayne, James~B Rawlings, Christopher~V Rao, and Pierre~OM Scokaert.
\newblock Constrained model predictive control: Stability and optimality.
\newblock \emph{Automatica}, 36\penalty0 (6):\penalty0 789--814, 2000.

\bibitem[Qin and Badgwell(2003)]{qin2003survey}
S~Joe Qin and Thomas~A Badgwell.
\newblock A survey of industrial model predictive control technology.
\newblock \emph{Control Engineering Practice}, 11\penalty0 (7):\penalty0 733--764, 2003.

\bibitem[Rawlings et~al.(2017)Rawlings, Mayne, Diehl, et~al.]{rawlings2017model}
James~B. Rawlings, David~Q Mayne, Moritz Diehl, et~al.
\newblock \emph{Model Predictive Control: Theory, Computation, and Design}, volume~2.
\newblock Nob Hill Publishing Madison, WI, 2017.

\bibitem[Schimperna et~al.(2025)Schimperna, Worthmann, Schaller, Bold, and Magni]{schimperna2025data}
Irene Schimperna, Karl Worthmann, Manuel Schaller, Lea Bold, and Lalo Magni.
\newblock Data-driven model predictive control: Asymptotic stability despite approximation errors exemplified in the {K}oopman framework.
\newblock \emph{arXiv preprint arXiv:2505.05951}, 2025.

\bibitem[Scokaert et~al.(2002)Scokaert, Mayne, and Rawlings]{scokaert2002suboptimal}
Pierre~OM Scokaert, David~Q Mayne, and James~B Rawlings.
\newblock Suboptimal model predictive control (feasibility implies stability).
\newblock \emph{IEEE Transactions on Automatic Control}, 44\penalty0 (3):\penalty0 648--654, 2002.

\bibitem[Shang et~al.(2024)Shang, Cort{\'e}s, and Zheng]{shang2024willems}
Xu~Shang, Jorge Cort{\'e}s, and Yang Zheng.
\newblock Willems’ fundamental lemma for nonlinear systems with {K}oopman linear embedding.
\newblock \emph{IEEE Control Systems Letters}, 8:\penalty0 3135--3140, 2024.

\bibitem[Shang et~al.(2025{\natexlab{a}})Shang, Li, and Zheng]{shang2025dictionary}
Xu~Shang, Zhaojian Li, and Yang Zheng.
\newblock Dictionary-free {K}oopman predictive control for autonomous vehicles in mixed traffic.
\newblock In \emph{2025 IEEE Conference on Control Technology and Applications (CCTA)}, pages 571--576. IEEE, 2025{\natexlab{a}}.

\bibitem[Shang et~al.(2025{\natexlab{b}})Shang, Wang, and Zheng]{shang2024decentralized}
Xu~Shang, Jiawei Wang, and Yang Zheng.
\newblock Decentralized robust data-driven predictive control for smoothing mixed traffic flow.
\newblock \emph{IEEE Transactions on Intelligent Transportation Systems}, 26\penalty0 (2):\penalty0 2075--2090, 2025{\natexlab{b}}.

\bibitem[Shi et~al.(2024)Shi, Haseli, Mamakoukas, Bruder, Abraham, Murphey, Cort{\'e}s, and Karydis]{shi2024koopman}
Lu~Shi, Masih Haseli, Giorgos Mamakoukas, Daniel Bruder, Ian Abraham, Todd Murphey, Jorge Cort{\'e}s, and Konstantinos Karydis.
\newblock {K}oopman operators in robot learning.
\newblock \emph{arXiv preprint arXiv:2408.04200}, 2024.

\bibitem[Str{\"a}sser et~al.(2024)Str{\"a}sser, Schaller, Worthmann, Berberich, and Allg{\"o}wer]{strasser2024koopman}
Robin Str{\"a}sser, Manuel Schaller, Karl Worthmann, Julian Berberich, and Frank Allg{\"o}wer.
\newblock {K}oopman-based feedback design with stability guarantees.
\newblock \emph{IEEE Transactions on Automatic Control}, 70\penalty0 (1):\penalty0 355--370, 2024.

\bibitem[Williams et~al.(2015)Williams, Kevrekidis, and Rowley]{williams2015data}
Matthew~O Williams, Ioannis~G Kevrekidis, and Clarence~W Rowley.
\newblock A data--driven approximation of the {K}oopman operator: Extending dynamic mode decomposition.
\newblock \emph{Journal of Nonlinear Science}, 25:\penalty0 1307--1346, 2015.

\bibitem[Worthmann et~al.(2024)Worthmann, Str{\"a}sser, Schaller, Berberich, and Allg{\"o}wer]{worthmann2024data}
Karl Worthmann, Robin Str{\"a}sser, Manuel Schaller, Julian Berberich, and Frank Allg{\"o}wer.
\newblock Data-driven {MPC} with terminal conditions in the {K}oopman framework.
\newblock In \emph{2024 IEEE 63rd Conference on Decision and Control (CDC)}, pages 146--151. IEEE, 2024.

\bibitem[Zhang et~al.(2022)Zhang, Pan, Scattolini, Yu, and Xu]{zhang2022robust}
Xinglong Zhang, Wei Pan, Riccardo Scattolini, Shuyou Yu, and Xin Xu.
\newblock Robust tube-based model predictive control with {K}oopman operators.
\newblock \emph{Automatica}, 137:\penalty0 110114, 2022.

\bibitem[Zheng et~al.(2016)Zheng, Li, Li, Borrelli, and Hedrick]{zheng2016distributed}
Yang Zheng, Shengbo~Eben Li, Keqiang Li, Francesco Borrelli, and J~Karl Hedrick.
\newblock Distributed model predictive control for heterogeneous vehicle platoons under unidirectional topologies.
\newblock \emph{IEEE Transactions on Control Systems Technology}, 25\penalty0 (3):\penalty0 899--910, 2016.

\end{thebibliography}

\newpage
\appendix

\section{Technical proofs}
We here present technical proofs for \Cref{lemma:LES} and the results in \Cref{sec:LQR-stab,section:analysis-original-state}.

\subsection{Proof of \Cref{lemma:LES}}
\label{Appendix:LES}
Following \Cref{def:exp-stab}, we here demonstrate that the Lyapunov function $V$ satisfying the conditions in \Cref{lemma:LES} leads to local exponential stability. 

Let us recall standard definitions for an invariant set and region of attraction.

\begin{deff}[Positive invariant set and region of attraction \citet{rawlings2017model}] Consider the autonomous system $x_{t+1} = g(x_t)$. 
\begin{enumerate}
    \item A set $\mathcal{D}$ is positive invariant if $x_t \in \mathcal{D}$ implies $x_{t+1} \in \mathcal{D}$.
    \item A set $\mathcal{D}$ of initial states $x_0$ with $\lim_{t\to \infty} x_t = \mathbb{0}$ is a region of attraction for the origin $\mathbb{0}$.
\end{enumerate}
\end{deff}

\textbf{Proof of  \Cref{lemma:LES}:} 
From the condition \cref{eq:decrease}, we can obtain
\[
\begin{aligned}
V(x_{t+1}) &\le V(x_t) - \alpha_3\|x_t\|^2 \\
&\le (1-\alpha_3/\alpha_2) V(x_t), \quad \forall x_t \in \mathcal{D},
\end{aligned}
\]
which implies $V(x_t) \le (1-\alpha_3/\alpha_2)^t V(x_0)$. Since the set $\mathcal{D}$ is invariant, the function value $V(x_{t+1})$ is always well-defined when $x_t \in \mathcal{D}$. 

Then, using condition \cref{eq:quad-bounds}, we can write 
\[
\begin{aligned}
\|x_t\| &\le \sqrt{V(x_t)/\alpha_1} \\
&\le \sqrt{(1-\alpha_3/\alpha_2)^t / \alpha_1 V(x_0)} \\
&\le \sqrt{\alpha_2/\alpha_1(1-\alpha_3/\alpha_2)^t} \|x_0\|, \quad \forall x_0 \in \mathcal{D}.
\end{aligned}
\]
This implies that states in $\mathcal{D}$ converge to the origin exponentially and $\mathcal{D}$ is an ROA. 

Let $c:= \sqrt{\alpha_2/\alpha_1}$ and $\rho := \sqrt{1-\alpha_3/\alpha_2}$. We note that there exists a neighborhood $\mathcal{N} \subseteq \mathcal{D}$ as $\mathbb{0} \in \mathrm{int}(\mathcal{D})$. Thus, we have $\|x_t\| \le c\rho^t \|x_0\|, \forall x_0 \in \mathcal{N}, t \in \mathbb{Z}_{\ge 0}$, which completes the proof.

\subsection{Proof of \Cref{lemma:optimal-Koopman-control}}
\label{Appendix:optimal-Koopman}
Thanks to the exact Koopman linear embedding, we can equivalently rewrite the nonlinear dynamics in \cref{eq:optimal-control} in the Koopman-lifted space as (we restate \cref{eqn:LQR-KL} here)
\[
\begin{aligned}
\min_{\mathbf{u}_\infty} & \quad \sum_{k = 0}^{\infty} l(Cz_{t+k}, u_{t+k}) \\
\text{subject to}& \quad z_{t+k+1} = Az_{t+k} + Bu_{t+k}, \\
& \quad z_t = z, k \in\mathbb{Z}_{\ge 0},
\end{aligned}
\]
where $z = \Psi(x)$ and we have $x_{t+k} = Cz_{t+k}, k \in \mathbb{Z}_{\ge 0}$. For a fixed $x \in \mathbb{R}^n$, the cost values in \cref{eq:optimal-control} and \cref{eqn:LQR-KL} are the same given the same input sequence $\mathbf{u}_\infty$.  

Since \cref{eqn:LQR-KL} can be viewed as a standard LQR problem with an initial condition $z_t = \Psi(x_t)$ under \Cref{assum:Koop-obser}, the controller \cref{eq:optimal-Koopman-control} is an optimal feedback policy to \cref{eqn:LQR-KL}. Therefore, \cref{eq:optimal-Koopman-control} is also a globally optimal feedback controller to the original problem \cref{eq:optimal-control}.

\subsection{Proof of \Cref{proposition:global-stable}}
\label{Appendix:global-stable}
We need to show that: 1) the system is globally attractive, that is, $\lim_{t\to \infty} x_t = 0, \; \forall x_0 \in \mathbb{R}^n$ and 2) the system is locally Lyapunov stable, that is, for any $\epsilon >0$, there exists $\delta >0$, such that $\|x_0\| < \delta$ implies $\|x_t\| < \epsilon$, for all $t \in \mathbb{Z}_{\ge 0}$. 

We recall the associated optimal value function for \cref{eqn:LQR-KL} is $V_{\infty}^*(z) = \|z\|_P^2:=z^\tr P z$ and we have the following lower and upper bounds: 
\begin{equation}
\label{eqn:LQR-value}
\lambda_Q \|Cz\|^2 \le V_{\infty}^*(z) \le \sigma_P \|z\|^2,
\end{equation}
where $\lambda_Q$ is the minimum eigenvalue of $Q$, and $\sigma_P$ is the maximum eigenvalue of~$P$ (see \eqref{eqn:Riccati}).
    
The globally attractive property is easy to show. Since the optimal LQR gain can stabilize the exact Koopman linear model, we have $\lim_{t \rightarrow \infty} z_t = 0, \forall z_0 \in \mathbb{R}^{n_z}$ which implies $\lim_{t\rightarrow \infty} \Psi(x_t) = 0, \forall x_0 \in \mathbb{R}^n$. As $x_t = C \Psi(x_t)$ by \cref{assum:Koop-obser}, the physical state $x_t$ of the nonlinear system converges to the origin asymptotically. 
    
We next show the local stability. For any fixed $\epsilon>0$, let $\rho = \lambda_Q \epsilon^2$. Consider the set 
$$
\mathcal{S} := \{x \in \mathbb{R}^n \ | \ V_{\infty}^*(\Psi(x)) < \rho \}.
$$ 
Using the Riccati equation, for any $x \in \mathcal{S}$, we have 
    \[
    V_{\infty}^*(\Psi(x^+)) - V_{\infty}^*(\Psi(x)) 
    = \ -\Psi(x)^\tr (C^\tr Q C + K^\tr R K)\Psi(x)\le 0. 
    \]
This implies that $ V_{\infty}^*(\Psi(x^+)) \le V_{\infty}^*(\Psi(x)) < \rho $, and thus we have $x^+ \in \mathcal{S}$. Therefore, $\mathcal{S}$ is an invariant set. 
    
On the other hand, from \cref{eqn:LQR-value}, we have 
    \[
    \lambda_Q \|x\|^2 \le V_{\infty}^*(\Psi(x)) < \rho \qquad \Rightarrow \qquad \|x\| < \epsilon, \forall x \in \mathcal{S},
    \]
meaning that $\mathcal{S} \subseteq \mathcal{B}_\epsilon$. Since $V_{\infty}^*(\Psi(x))$ is continuous, we know that the set $\mathcal{S}$ is open. In addition, we have $V(\Psi(\mathbb{0})) = V(\mathbb{0}) = \mathbb{0}$, implying $\mathbb{0} \in \mathcal{S}$. 
    
Thus, there exists a neighbor $\mathcal{N} \subset \mathcal{S}$ of the origin such that any trajectory starting from $\mathcal{N}$ remains in $\mathcal{S}$. It is also contained in $\mathcal{B}_\epsilon$. This establishes the stability in the sense of Lyapunov. We now complete the proof. 

\subsection{Proof of \Cref{them:exp-LQR}}
\label{Appendix:exp-LQR}
Consider a Lyapunov candidate 
\begin{equation} \label{eq:Lyapunov-candidate-LQR}   
\tilde{V}(x) =  V_{\infty}^*(\Psi(x)) :=\Psi(x)^\tr P \Psi(x), 
\end{equation}
where $P \succ 0$ is the unique solution to \cref{eqn:Riccati}. 
We show that $\tilde{V}$ satisfies \cref{eq:quad-bounds,eq:decrease} in \Cref{lemma:LES} over an invariant set~$S$. 

Let $\rho \in (0, \lambda_Q \hat{r}^2]$, where $\hat{r} := \min\{r_\psi, r\}$ with $r_\psi$ and $r$ from \Cref{assum:lift-cd} and \Cref{assum:pred-erro} respectively.  We choose the sublevel set 
$$\mathcal{S}:= \{x \in \mathbb{R}^n \ | \ \tilde{V}(x) < \rho\}.$$
Note that $\mathbb{0} \in \mathcal{S}$ and $\mathcal{S}$ is open as $\tilde{V}$ is continuous. We will prove that $S$ is bounded and invariant.

We first verify the lower bound~in~\cref{eq:quad-bounds}. 
This directly comes from \cref{eqn:LQR-value} as we have $x\! =\! C\Psi(x)$ by~\Cref{assum:Koop-obser}: 
\begin{equation} \label{eq:lower-bounds-LQR}
   \lambda_Q \|x\|^2 \le V_{\infty}^*(\Psi(x))=\tilde{V}(x), \quad \forall x \in \mathcal{S}. 
\end{equation}
By our choice $\rho \le {\lambda_Q} \hat{r}^2$, we have
$
\lambda_Q \|x\|^2 \le \tilde{V}(x) < \rho \Rightarrow \|x\| < \hat{r}$, for all $x \in \mathcal{S}.
$
Thus, we have $\mathcal{S} \subseteq \mathcal{B}_{\hat{r}}$, which is bounded.

The upper bound in~\cref{eq:quad-bounds} is ensured by \Cref{assum:lift-cd}. As $\mathcal{S} \subseteq \mathcal{B}_{\hat{r}} \subseteq \mathcal{B}_{r_\psi}$, we have
\begin{equation} \label{eq:upper-bounds-LQR}
   \tilde{V}(x) \leq \sigma_P \|\Psi(x)\|^2 \le \sigma_P L_\psi^2 \|x\|^2, \quad \forall x \in \mathcal{S}.
\end{equation}
Combining \cref{eq:lower-bounds-LQR} with \cref{eq:upper-bounds-LQR}, our Lyapunov candidate $\tilde{V}$ satisfies \cref{eq:quad-bounds} with $\alpha_1 = \lambda_Q$ and $\alpha_2 = \sigma_P L_\psi^2$. 

We next verify the decrease condition \cref{eq:decrease}. We see that
\begin{equation}
\begin{aligned}
    \tilde{V}(x^+)\! -\! \tilde{V}(x)
    &\!=\! V^*_\infty(\Psi(x^+)) \!-\! V^*_\infty(\Psi(x)) \\
    &\!=\!\underbrace{V^*_\infty(\Psi(x^+)) \!-\! V^*_\infty(\bar{z}^+)}_{\text{Koopman error}} \!+\! \underbrace{V^*_\infty(\bar{z}^+)\!-\!  V^*_\infty(z)}_{\text{Koopman decrease}}, \label{eq:decrease-decomposition}
\end{aligned}
\end{equation}
where we denote $z$ as $\Psi(x)$ and $\bar{z}^+ = (A+BK)z$ is the one-step ahead Koopman prediction. The Koopman decrease term in \cref{eq:decrease-decomposition} can be bounded as  
\begin{equation}
\label{eqn:LQR-Vde-nom}
\begin{aligned}
 V_{\infty}^*(\bar{z}^+) -  V_{\infty}^*(z) 
 &=-z^\tr (C^\tr Q C + K^\tr R K)z \\
 &\le -\lambda_Q \|x\|^2.
\end{aligned}
\end{equation}
where we have used the Riccati equation \cref{eqn:Riccati} and $x = Cz$. 

Meanwhile, consider the Koopman error term in \cref{eq:decrease-decomposition}, for any $x \in \mathcal{S}$, we have 
\begin{equation}
\label{eqn:bound-variation}
\begin{aligned}
\|V_\infty^*(\Psi(x^+)) - V_\infty^*(\bar{z}^+)\| 
& = \|\bar{e}_\C(x)^\tr P\bar{e}_\C(x) 
+ 2  \bar{e}_\C(x)^\tr P\bar{z}^{+}\|  \\
& \le  \sigma_P \|\bar{e}_\C(x)\|^2 \! +\! 2\sigma_P \|\bar{z}^+\| \|\bar{e}_\C(x)\| \\
&\leq \; (\sigma_P L + 2 \bar{\sigma} L_\psi \sigma_P)L\|x\|^2,
\end{aligned}
\end{equation}
where the last inequality uses \Cref{assum:lift-cd,assum:pred-erro} (note $\mathcal{S} \subseteq \mathcal{B}_{\hat{r}} \subseteq \mathcal{B}_{r_\psi} \cap \mathcal{B}_{r}$), and $\bar{\sigma}$ denotes the maximum singular value of $A + BK$. We can now find $\delta > 0$ such that,  when $L < \delta$, we have 
$$
\alpha_3:= \lambda_Q - (\sigma_P L + 2 \bar{\sigma} L_\psi\sigma_P)L> 0.
$$
Substituting \cref{eqn:LQR-Vde-nom} and \cref{eqn:bound-variation} into \cref{eq:decrease-decomposition}, we have  
\begin{equation}
\label{eqn:LQR-Vde-act}
\begin{aligned}
\tilde{V}(x^+) - \tilde{V}(x) \le -\alpha_3 \|x\|^2, \quad \forall x \in \mathcal{S},
\end{aligned}
\end{equation}
which implies $x^+ \in \mathcal{S}$ and $\mathcal{S}$ is invariant. Since the Lyapunov candidate \cref{eq:Lyapunov-candidate-LQR} satisfies the conditions in \Cref{lemma:LES}, the result follows.

\begin{rem} \label{remark:cost-decrease}
A key step is the decomposition of the one-step Lyapunov change in \cref{eq:decrease-decomposition}. This is a standard strategy in showing the inherent robustness of MPC that uses a nominal model with the modelling error or external disturbance \citep{allan2017inherent}. 

Specifically, we first demonstrate the MPC controller can stabilize the nominal model (Koopman decrease) and then treat the modelling error as a perturbation (Koopman error). 
The \emph{Koopman decrease} term is certified by the Riccati identity together with the state inclusion $x =Cz$, yielding~\cref{eqn:LQR-Vde-nom}. The Koopman error depends on both the lifting function and the one-step prediction error, thus, we make \Cref{assum:lift-cd,assum:pred-erro} on them to ensure the Koopman error is sufficiently small. \Cref{assum:lift-cd} (local bound of the lifting function) is used to ensure $\|\Psi(x)\|$ is bounded by the actual state $x$ locally, which (i) gives the \emph{upper quadratic bound} on $\tilde V(x)$ and (ii) ensures $\|\bar z^{+}\|=\|(A{+}BK)\Psi(x)\|\le \bar\sigma L_\psi\|x\|$.
\Cref{assum:pred-erro} (local closed-loop prediction accuracy) bounds the \emph{Koopman prediction-error} term as in \cref{eqn:bound-variation}, which can be made strictly smaller than the Koopman decrease by choosing $L$ sufficiently small.
Thus $L<\delta$ ensures the quadratic decay in~\cref{eqn:LQR-Vde-act} and, in turn, local exponential stability by \Cref{lemma:LES}. 
\hfill$\square$
\end{rem}

\subsection{Proof of \Cref{prop:terminal}} \label{appendix:proof-terminal-set}
We recall that, from the construction of the terminal cost and terminal set, we have
\begin{subequations}
\begin{equation}
\label{eqn:state-stage-cost}
\hat{Q} \succeq C^\tr Q C + K^\tr R K,
\end{equation} 
\begin{equation}
\label{eqn:lyap-eqn}
A_K^\tr \hP A_K -\hP +\hat{Q} = 0 .
\end{equation}
\end{subequations}
We first show the inequality \cref{eqn:terminal-condi} is satisfied with the proposed terminal controller. We can write 
    \[
    \begin{aligned}
    V_\f(\bar{z}^+) \!-\! V_\f(z) &\! =\! z^\tr(A_K^\tr \hP A_K - \hP)z\! =\! -z^\tr \hat{Q} z \\
    & \le \!-z^\tr(C^\tr Q C + K^\tr R K) z\\
    & =\! -l(Cz, \kappa_\f(z)),
    \end{aligned}
    \]
    where the second equality and the third inequality come from~\cref{eqn:lyap-eqn} and \cref{eqn:state-stage-cost}, respectively. 
    
    As $l(Cz, \kappa_\f(z)) \ge 0$, we obtain $$V_\f(\bar{z}^+) \le V_\f(z) \le \tau.$$ Thus, $\bar{z}^+ \in \mathcal{Z}_\f$ and the terminal set is control invariant.

    We then present that the input given by the terminal controller satisfies the input constraint. From the inequality~\cref{eqn:terminal-condi} and the designed terminal set \cref{eq:terminal-set}, we have
\begin{align}
V_\f(\bar{z}^+) + l(Cz, \kappa_\f(z)) 
&= \ V_\f(\bar{z}^+) + z^\tr C^\tr Q Cz + \kappa_\f(z)^\tr R \kappa_\f(z)  \nonumber\\
&\le  \ V_\f(z) \le \tau, \ \forall z \in \mathcal{Z}_\f. \nonumber 
\end{align}
This implies that 
$$
\|\kappa_\f(z)\| \le \sqrt{\frac{\tau}{\sigma_R}}, \ \forall z \in \mathcal{Z}_\f, 
$$
where $\sigma_R$ is the maximum eigenvalue of $R$. Since $\sqrt{\frac{\tau}{\sigma_R}} \mathcal{B}_1 \subseteq \mathcal{U}$ from the construction, we have $\kappa_\f(z) \in \mathcal{U}, \forall z \in \mathcal{Z}_\f$. This completes the proof.

\subsection{Proof of \Cref{prop:output-stabi}}
\label{Appendix:output-stabi}
We first establish $\mathcal{X}_N$ is an ROA, \emph{i.e.}, the Koopman~MPC problem \cref{eq:Koopman-MPC} is recursively feasible and the resulting closed-loop states converge to zero asymptotically from any initial state $x_t \!=\! x\! \in \! \mathcal{X}_N$.

Since the Koopman linear model is exact by assumption, we have 
$$x^+ = C\bar{z}^+.
$$ Meanwhile, we know $\bar{z}^+ \in \mathcal{Z}_N$ by \Cref{prop:rec-feasibility}. Thus, the Koopman MPC problem \cref{eq:Koopman-MPC} is feasible with the initial state $x^+$. We can then denote the resulting optimal Koopman control sequence as 
$$
\mathbf{u}^{\textrm{opt}} := (\kappa(\Psi(x_t)), \kappa(\Psi(x_{t+1})), \ldots, \kappa(\Psi(x_{t+\infty}))).
$$
   
We prove $\lim_{t\rightarrow\infty}x_t = 0$ by showing the running MPC cost that $\sum_{k=0}^{\infty} \|x_{t+k}\|_Q^2+\|\kappa(\Psi(x_{t+k}))\|_R^2$ is finite (recall that $Q, R$ are positive definite). From \cref{eq:decrease-Vn-Koopman}, we have 
\[
     \ V_N^*(\Psi(x_t)) \! \ge \! V_N^*(\Psi(x_{t+1}))\! +\! \|x_t\|_Q^2\! +\! \|\kappa(\Psi(x_t))\|_R^2 \ge  \ 
    \sum_{k=0}^{\infty} \|x_{t+k}\|_Q^2\!+\!\|\kappa(\Psi(x_{t+k}))\|_R^2.
\]
As $V_N^*(\Psi(x_t))$ is upper bounded, cf. \Cref{prop:boundedness}, we know $\sum_{k=0}^{\infty} \|x_{t+k}\|_Q^2\!+\!\|\kappa(\Psi(x_{t+k}))\|_R^2$ is finite. Thus, we have $\lim_{t\rightarrow\infty}x_t = 0$, i.e., all states in $\mathcal{X}_N$ converge to the origin asymptotically. 

We then prove the closed-loop system is locally asymptotically stable. This is equivalent to showing that 1) the system is locally attractive and 2) stable in the sense of Lyapunov.~Point 1) is obviously true as $\mathcal{X}_N$ is an ROA and $\mathbb{0} \in \mathrm{int}(\mathcal{X}_N)$. 
    
For point 2), fix any $\epsilon >0$, let $\rho = \lambda_Q \epsilon^2$, and consider $\mathcal{S} := \{x \in \mathcal{X}_N \ | \ V_N^*(\Psi(x)) < \rho \}$. From \cref{eq:decrease-Vn-Koopman}, we have 
\[
     V_N^*(\Psi(x^+))\le V_N^*(\Psi(x)) <  \rho \Rightarrow x^+ \in \mathcal{S}, \ \forall x \in \mathcal{S}.
\]
Thus, $\mathcal{S}$ is an invariant set. From the boundedness property in \Cref{prop:boundedness}, we have 
\[
    \lambda_Q \|x\|^2 \le V_N^*(\Psi(x)) < \rho \Rightarrow \|x\| < \epsilon, \ \forall x \in \mathcal{S},
\]
indicating $\mathcal{S} \subseteq \mathcal{B}_\epsilon$. Since $V_N^*(\Psi(x))$ is  continuous and $\mathbb{0} \in \mathcal{S}$, the origin is an interior point of $\mathcal{S}$ with the associated neighborhood $\mathcal{N} \subseteq \mathcal{S}$. Thus, any trajectory starting in $\mathcal{N}$ remains in $\mathcal{S} \subseteq \mathcal{B}_\epsilon$. This completes the proof.

\subsection{Proof of \Cref{lemma:feasibility}}
\label{append:one-step-feasibility}
We show that $\bar{\mathbf{u}}$ (see the construction in \Cref{prop:rec-feasibility}) is a feasible solution for the initial state $z^+ = \Psi(x^+)$, which guarantees the one-step feasibility. We need to prove $\phi(N; z^+, \ubb) \in \mathcal{Z}_\f$, which is equivalent to show $V_\f(\phi(N;z^+, \ubb)) \le \tau$. Thus, our goal in this part is to derive an upper bound for $V_\f(\phi(N;z^+, \ubb))$. The key idea for the derivation is to first bound the difference between $V_\f(\phi(N; z^+, \ubb))$ and $V_\f(\phi(N;\bar{z}^+,\ubb))$, then derive an upper bound for  $V_\f(\phi(N;\bar{z}^+,\ubb))$, and finally combine both to obtain the upper bound of $V_\f(\phi(N;z^+, \ubb))$.

We first derive upper bounds for the optimal control input $\ubb_z^*(0)$, the input sequence $\ubb,$ and the predicted state $\bar{z}^+$ of the nominal Koopman linear embedding. We will utilize these bounds later in bounding the difference between $V_\f(\phi(N;z^+, \ubb))$ and $V_\f(\phi(N;\bar{z}^+, \ubb))$. Using the bound of $z$ (\emph{i.e.}, $\|z\| = \|\Psi(x)\| \le L_\psi \|x\|$ from \Cref{assum:lift-cd}) and the property of Koopman MPC (see \Cref{prop:boundedness} and \Cref{prop:rec-feasibility}), the upper bound of $\ub_z^*(0)$ and $\ubb$ can be derived as 
\begin{gather}
        \|\ub_z^*(0)\|^2 \le  \frac{V_N^*(z)}{\lambda_R} \le \frac{c_z\|z\|^2}{\lambda_R} \le c_1 \|x\|^2, \label{eqn:inputOpt-bound} \\
        \|\ubb\|^2  \! \le \! \frac{V_N(\bar{z}^+, \ubb)}{\lambda_R} \! \le \! \frac{V_N^*(z)}{\lambda_R} \! \le \! \frac{c_z\|z\|^2}{\lambda_R} \! \le \! c_1 \|x\|^2, \label{eqn:inputSeq-bound}
\end{gather}
where $c_1 = \frac{c_z L^2_{\psi}}{\lambda_R} \in \mathbb{R}$ is a finite constant. Combining both the bound of $z$ from \Cref{assum:lift-cd} and the bound of $\ub_z^*(0)$~\cref{eqn:inputOpt-bound}, we obtain the bound of $\|\bar{z}^+\|$, that is:
\[
    \begin{aligned}
    \|\bar{z}^+\| & = \|Az + B\ub_z^*(0)\|\\ &\le \|A\|\|z\| + \|B\|\|\ub_z^*(0)\| \\
    & \le \sigma_A L_\psi \|x\| + \sigma_B \sqrt{c_1} \|x\| = c_2 \|x\|,
    \end{aligned}
\]
where $\sigma_A$, $\sigma_B$ are maximum singular values of $A, B$, respectively and $c_2 = \sigma_A L_{\psi}+ \sigma_B\sqrt{c_1}$.

We then bound the difference between $V_\f(\phi(N;z^+, \ubb))$ and $V_\f(\phi(N;\bar{z}^+, \ubb))$. We can write the terminal state $\phi(N;z, \ub)$ with given $z$ and $\mathbf{u}$ as 
    \[
    \phi(N;z, \ub) = \bar{O}_N z + \bar{T}_N \ub,
    \]
where, following the propagation of the nominal Koopman linear embedding, we have
    \[
    \bar{O}_N = A^N, \quad \bar{T}_N = \begin{bmatrix}
     A^{N-1}B & A^{N-2}B & \ldots & B   
    \end{bmatrix}.
    \]
Thus, we can derive the difference between $V_\f(\phi(N;z^+, \ubb))$ and $V_\f(\phi(N;\bar{z}^+, \ubb))$ as 
\begin{equation}
\label{eq:diff-final-state}
\begin{aligned}
    & \ \|V_\f(\phi(N;z^+, \ubb))-V_\f(\phi(N;\bar{z}^+, \ubb))\| \\ 
    = & \  \|(\bar{O}_N z^+ + \bar{T}_N \ubb)^\tr \hP (\bar{O}_N z^+ + \bar{T}_N \ubb) - (\bar{O}_N \bar{z}^+ + \bar{T}_N \ubb)^\tr \hP (\bar{O}_N \bar{z}^+ + \bar{T}_N \ubb)\|  \\
    \le & \ 2\sigma_1 \|\bar{z}^+\|\|z^+\!-\!\bar{z}^+\| \! +\! \sigma_1 \|z^+ \!-\! \bar{z}^+\|^2 \! 
     +\! 2 \sigma_2 \|\ubb\| \|z^+ \!-\!\bar{z}^+\| \\ 
    \le & \  2\sigma_1 c_2 L \|x\|^2 \!+\! \sigma_1 L^2 \|x\|^2 \!+\! 2 \sigma_2 \sqrt{c_1}L \|x\|^2 \! = \!  c_3 \|x\|^2\!,
\end{aligned}
\end{equation}
in which $\sigma_1, \sigma_2$ are maximum singular values of $\bar{O}_N^\tr \hP \bar{O}_N$ and $\bar{O}_N^\tr \hP \bar{T}_N$ and we have $c_3 = \sigma_1 L^2+  2(\sigma_1 c_2  + \sigma_2 \sqrt{c_1})L$. We note that the difference between $z^+$ and $\bar{z}^+$ is the closed-loop one-step prediction error in \Cref{assum:error-bound}: 
$$
    \|z^+\! -\! \bar{z}^+\| \! = \! \|\Psi(f(x, \ub_z^*(0)))\!-\! A z \!-\! B \ub_z^*(0) \| \!=\! \|e_\C(x)\| \le L \|x\|.
    $$

We next derive the upper bound of $V_\f(\phi(N;\bar{z}^+, \bar{\mathbf{u}}))$. We consider two cases: 1) $0 \le V_\f(\phi(N;z,\ub_z^*))$ $<$ $\frac{\tau}{2}$; 2) $\frac{\tau}{2} \le V_\f(\phi(N;z,\ub_z^*))\le \tau$. For case I, we have
    \[
    \begin{aligned}
    & V_{\f}(\phi(N; \bar{z}^+, \ubb))-V_{\f}(\phi(N; z, \ub_z^*)) \le 0 \\
    \Rightarrow \ & V_{\f}(\phi(N; \bar{z}^+, \ubb)) \le \frac{\tau}{2}.
    \end{aligned}
    \]
For case II, from the design of the terminal cost (see the Lyapunov equation of $\hP$ in \cref{eq:terminal-cost}) and its upper bound $V_\f(z) \le \sigma_{\hat{P}} \| z\|^2$, we have
    \[
    \begin{aligned}
    & V_{\f}(\phi(N; \bar{z}^+, \ubb))-V_{\f}(\phi(N; z, \ub_z^*)) \\
    =\! &\! -\!\phi(N; z, \ub_z^*)^\tr  \hat{Q} \phi(N; z, \ub_z^*) \! \\
    \le &\! -\! \lambda_{\hat{Q}} \| \phi(N; z, \ub_z^*)\|^2 
    \le -\frac{\lambda_{\hat{Q}}\tau}{2\sigma_{\hP}},
    \end{aligned}
    \]
where $\lambda_{\hat{Q}} > 0$ is the minimum eigenvalue of $\hat{Q} \succ 0$. Thus, we have
    \begin{equation}
    \label{eqn:bound-linearized}
    V_\f(\phi(N; \bar{z}^+, \ubb)) \le \tau -\gamma,
    \end{equation}
where $\gamma = \min\{\frac{\tau}{2}, \frac{\lambda_{\hat{Q}}\tau}{2\sigma_{\hP}}\} > 0$. 
    
Combing~\cref{eq:diff-final-state} and~\cref{eqn:bound-linearized}, we finally have
    \[
    \begin{aligned}
    V_\f(\phi(N;z^+, \ubb))  &\le V_\f(\phi(N;\bar{z}^+, \ubb)) + c_3 \|x\|^2 \\
    &\le  \tau -\gamma + c_3\|x\|^2.
    \end{aligned}
    \]
We can consider $c_3$ as a function of $L$ such that $c_3 =\alpha_1(L) := \sigma_1L^2 + 2(\sigma_1 c_2 + \sigma_2 \sqrt{c_1})L \in \mathcal{K}$. Thus, if $L \le \delta_1 := \alpha_1^{-1}(\frac{\gamma}{c_x})$ where $c_x = \sup_{x \in \mathcal{S}} \|x\|^2$, we have $c_3 \|x\|^2 \le \gamma$ which means $V_\f(\phi(N;z^+, \ubb)) \le \tau$.

\subsection{Proof for \Cref{lemma:decent}} 
\label{append:decent}
    Our goal in this part is to obtain an upper bound of the difference between $V_N^*(\Psi(x^+))$ and $V_N^*(\Psi(x))$ (\emph{i.e.}, $V_N^*(z^+)$ and $V_N^*(z)$) and present it is a negative quadratic function under sufficiently small $\delta_2$. From the descent property of the Koopman MPC (see \Cref{prop:rec-feasibility}), we can obtain an upper bound for the difference between $V_N(\bar{z}^+, \ubb)$ and $V_N^*(z)$. We can then bridge $V_N^*(z^+)$ and $V_N^*(z)$ by bounding the difference between $V_N^*(z^+)$ and $V_N(\bar{z}^+, \ubb)$.
    
    We first bound the difference between $V_N(z^+, \ubb)$ and $V_N(\bar{z}^+, \ubb)$, which is also an upper bound for the difference between $V_N^*(z^+)$ and $V_N(\bar{z}^+, \ubb)$ from the principle of optimality. Given $z$ and $\mathbf{u}$, we can express the explicit form of $V_N(z, \ub)$~as 
    \[
    V_N(z, \ub) = (O_N z + T_N \ub)^\tr \bar{Q} (O_N z + T_N \ub) + \ub^\tr \bar{R}\ub
    \]
    where, following the propagation of the nominal Koopman linear embedding, we have 
    \[
    O_N = \begin{bmatrix}
        I & A & A^2 & \ldots & A^N
    \end{bmatrix}^\tr, \quad  
    T_N  =\begin{bmatrix}
0 & 0 & 0 & \cdots & 0 \\
B & 0 & 0 & \cdots & 0 \\
AB & B & 0 & \cdots & 0\\
\vdots & \vdots & \vdots & \ddots & \vdots \\
A^{N-1}B & A^{N-2}B & A^{N-3}B & \cdots & B \\ 
\end{bmatrix},
    \]
    and $\bar{Q} := \mathrm{diag}(\tilde{Q}, \tilde{Q}, \ldots, \tilde{Q}, \hP) \in \mathbb{R}^{(N+1)n_z\times(N+1)n_z}$, $\tilde{Q} = C^\tr Q C \in \mathbb{R}^{n_z\times n_z}$ and $\bar{R}:= \mathrm{diag}(R,$ $ R,\ldots, R) \!\in\! \mathbb{R}^{Nm \times Nm}$. 
    
    The difference between $V_N(z^+, \ubb)$ and $V_N(\bar{z}^+, \ubb)$ can be bounded~as
    \begin{equation}
        \label{eq:diff-all-state}
    \begin{aligned}
    & \ \|V_N(z^+, \ubb)-V_N(\bar{z}^+, \ubb)\|\\
    =&\ \|(O_N z^+ + T_N \bar{\ub})^\tr \bar{Q} (O_N z^+ + T_N \bar{\ub}) -(O_N \bar{z}^+ + T_N \bar{\ub})^\tr \bar{Q} (O_N \bar{z}^+ + T_N \bar{\ub})\| \\
    \le & \ 2\sigma_3 \|\bar{z}^+\|\|z^+\!-\!\bar{z}^+\| \! + \! \sigma_3 \|z^+ \! - \! \bar{z}^+\|^2 \! + \! 2 \sigma_4 \|\ubb\| \|z^+ \!-\!\bar{z}^+\| \\ 
    \le & \ 2\sigma_3 c_2 L \|x\|^2\!\! +\! \sigma_3 L^2 \|x\|^2\!\! +\! 2\sigma_4 \sqrt{c_1} L\|x\|^2 \! = \! c_4 \|x\|^2,
    \end{aligned}
    \end{equation}
    in which $\sigma_3, \sigma_4$ are maximum singular values of $O_N^\tr \bar{Q} O_N$ and $O_N^\tr \bar{Q} T_N$ and we have $c_4 = \sigma_3 L^2 + 2(\sigma_3 c_2  + \sigma_4 \sqrt{c_1})L$. Thus,~\cref{eq:diff-all-state} implies
    \begin{equation}
    \label{eqn:bound-nonlinear-cost}
    V_N^*(z^+) - V_N(\bar{z}^+, \ubb) \le  V_N(z^+, \ubb)-V_N(\bar{z}^+, \ubb)  \le  c_4 \|x\|^2.
    \end{equation}

    We then bridge $V_N^*(z^+)$ and $V_N^*(z)$ via $V_N(\bar{z}^+, \ubb)$. From \Cref{prop:rec-feasibility}, we have
    \begin{equation}
    \label{eqn:bound-linearized-cost}
     V_N(\bar{z}^+, \ubb) \le V^*_N(z) - \lambda_Q \|x\|^2.
    \end{equation}
    Combing~\cref{eqn:bound-nonlinear-cost} and~\cref{eqn:bound-linearized-cost}, we finally have
    \[
    V_N^*(z^+) - V_N^*(z) \le -\lambda_Q \|x\|^2 + c_4 \|x\|^2.
    \]
    Again, we treat $c_4$ as $c_4 = \alpha_2(L) := \sigma_3 L^2 + 2(\sigma_3 c_2  + \sigma_4 \sqrt{c_1})L \in \mathcal{K}$, thus, there exists $\delta_2 := \alpha
    _2^{-1}(\lambda_Q)$ and $c$ such that $c: = \lambda_Q - c_4 > 0$ for $L < \delta_2$ which implies
     \[
    V_N^*(z^+) - V_N^*(z) \le -c \|x\|^2, \ \forall x \in \mathcal{S}.
    \]

\end{document}